\documentclass{amsart}

\usepackage{enumerate,mathtools,mathptmx,xcolor,bbold,subcaption,amssymb,amsmath}
\usepackage[colorlinks=true,linkcolor=red!70!black,citecolor=green!70!black,urlcolor=magenta!70!black]{hyperref}
\usepackage{tikz}
\usepackage{tikz-cd}

\newtheorem{mainthm}{Theorem}
\newtheorem{thm}{Theorem}[subsection]
\newtheorem{lem}[thm]{Lemma}

\newtheorem{cor}[thm]{Corollary}

\newtheorem{rmk}[thm]{Remark}

\newcommand{\defeq}{\stackrel{\mathrm{def}}{=}}
\newcommand{\NN}{\mathbb{N}}

\newcommand{\ZZ}{\mathbb{Z}}

\newcommand{\CC}{\mathbb{C}}
\newcommand{\p}{\mathsf{P}}
\newcommand{\NP}{\mathsf{NP}}
\newcommand{\shP}{\#\mathsf{P}}
\newcommand{\fP}{\mathsf{fP}}

\newcommand{\calR}{\mathcal{R}}

\DeclareMathOperator{\CSP}{CSP}

\DeclareMathOperator{\Hom}{Hom}

\begin{document}

%%%%%%%%%%%%%%%%%%%%%%%%%%%%%%%%%%%%%%%%%%%%%%%%%%%%%%%%%%%%%%%%%%%%%%%%%%%%%%%%
% Title, metadata, etc...

\title[Obstruction theory and counting complexiy]{Obstruction theory and the complexity of counting group homomorphisms} 

\author{Eric Samperton}
\address{
	Departments of Mathematics and Computer Science \\
	Purdue Quantum Science \& Engineering Institute \\
	150 North University Street \\
	Purdue University \\
	West Lafayette, IN \\
	47907
}
\email{eric@purdue.edu}

\author{Armin Wei{\ss} }
\address{FMI, University of Stuttgart \\
	 Universitätsstraße 38\\
	  70569 Stuttgart, Germany }
\email{armin.weiss@fmi.uni-stuttgart.de}

\date{\today}

\begin{abstract}
	Fix a finite group $G$.
	We study the computational complexity of counting problems of the following flavor: given a group $\Gamma$, count the number of homomorphisms $\Gamma \to G$.
	Our first result establishes that this problem is $\#\mathsf{P}$-hard whenever $G$ is a non-abelian group and $\Gamma$ is provided via a finite presentation.
	We give several improvements showing that this hardness conclusion continues to hold for restricted $\Gamma$ satisfying various promises.
	Our second result shows that if $G$ is class 2 nilpotent and $\Gamma = \pi_1(M^3)$ for some input 3-manifold triangulation $M^3$ with $|H^2(M,Z(G)|$ bounded above, then there is a polynomial time algorithm to compute the number of homomorphisms from $\Gamma$ to $G$.		
	This algorithm is explained in part by the fact that 3-manifolds are close enough to being Eilenberg-MacLane spaces for us to solve the necessary group cohomological obstruction problems efficiently using the given triangulation.
	A similar polynomial time algorithm for counting maps to finite, class 2 nilpotent $G$ exists when $\Gamma$ is itself a finite group encoded via a multiplication table, provided that $|H^2(\Gamma,Z(G))|$ is similarly bounded from above.
\end{abstract}

\thanks{
ES supported in part by NSF CCF 2330130.
AW supported by Deutsche Forschungsgemeinschaft (DFG, German Research Foundation) grant WE~6835/1-2.
ES thanks Greg Kuperberg, Nathan Dunfield, Colleen Delaney, Ben McReynolds, Eric Rowell, and Susan Hermiller for various helpful conversations.
Both authors thank Murray Elder for helpful conversations, as well as for introducing the authors to each other.
}

\maketitle

% Main Body
%%%%%%%%%%%%%%%%%%%%%%%%%%%%%%%%%%%%%%%%%%%%%%%%%%%%%%%%%%%%%%%%%%%%%%%%%%%%%%%%
\section{Introduction}
\label{sec:intro}
\subsection{Main results}
\label{ss:results}
The goal of this paper is to better understand the computational problem where we count the homomorphisms from an input group $\Gamma$ to a fixed finite target group $G$:
\[ \#\Hom(-,G) :\; \Gamma \mapsto \#\Hom(\Gamma, G) \defeq \#\{F: \Gamma \to G \mid F \text{ a group homomorphism}\} \in \NN.\]
We are especially interested in how the encoding of $\Gamma$ affects the complexity of this problem.
We have two main theorems.

Our first result shows that when $\Gamma$ is provided as a finitely presented group, the homomorphism counting problem for fixed target groups $G$ satisfies a hardness dichotomy according to whether or not $G$ is abelian.
\begin{mainthm}
	\label{thm:1}
	Fix a finite group $G$.
	The counting problem that takes a finitely presented group $\Gamma$ to the number of homomorphisms $\#\Hom(\Gamma,G)$ is $\shP$-complete via Turing reduction if and only if $G$ is nonabelian.
	If $G$ is abelian, then the problem is in $\fP$ (\emph{i.e.}~ it is solvable in polynomial time).
\end{mainthm}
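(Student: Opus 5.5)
We split the proof into three parts: membership in $\shP$, the abelian upper bound, and hardness for nonabelian $G$.

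\emph{Membership in $\shP$.} For a presentation $\Gamma=\langle x_1,\dots,x_n\mid r_1,\dots,r_m\rangle$, a homomorphism $\Gamma\to G$ is the same as a tuple $(g_1,\dots,g_n)\in G^n$ satisfying $r_j(g_1,\dots,g_n)=1$ for every $j$. Since $G$ is fixed, each relator can be checked in time linear in its length. Hence $\#\Hom(\Gamma,G)$ counts the accepting witnesses of a polynomial-time verifier.

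\emph{Abelian $G$.} Every homomorphism $\Gamma\to G$ factors through the abelianization. So $\#\Hom(\Gamma,G)=\#\Hom(\Gamma^{ab},G)$, and $\Gamma^{ab}\cong\ZZ^n/R$, where $R$ is spanned by the exponent-sum vectors of the relators. We compute the Smith normal form of the $m\times n$ relation matrix in polynomial time (Kannan--Bachem). This writes $\Gamma^{ab}\cong \ZZ^{k}\oplus\bigoplus_i \ZZ/d_i$. Then $\#\Hom(\Gamma,G)=|G|^{k}\prod_i \#\{g\in G: g^{d_i}=1\}$. Each factor depends only on $d_i$ modulo $\exp(G)$ and is read from a fixed table. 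A simpler alternative avoids Smith normal form: write $G\cong\bigoplus_p G_p$ and compute the solution count of the linear system over each $\ZZ/p^e$ by Gaussian elimination over local rings.

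\emph{Hardness for nonabelian $G$.} This is the main step. The plan is to reduce a known $\shP$-complete counting CSP to $\#\Hom(-,G)$. The natural tool is the classical correspondence: finite presentations with relators in the variables encode arbitrary systems of equations over $G$. Goldmann--Russell show that deciding whether a system of equations over a nonabelian group is solvable is $\NP$-complete. Their reduction encodes a 3-coloring or SAT-type constraint using commutator gadgets. We will make it parsimonious, or at least controlled-multiplicity, in the counting sense.

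Concretely, we take a one-in-three-SAT or graph-coloring instance. Each Boolean variable $v$ gets a generator $x_v$. We introduce a fixed element $a$, pinned by using constants: we add generators $c_g$ for $g\in G$ and relators giving the multiplication table of $G$. Every homomorphism then restricts on the $c_g$'s to an endomorphism $G\to G$. Counting with and without each restriction, we can isolate the maps that are automorphisms on the constants, and divide by $|\mathrm{Aut}(G)|$. Equations such as $[x_v,c_b]=1$ and $x_v^2=1$ restrict $x_v$ to a small set of values. Constraint gadgets built from iterated commutators $[\cdots[x,c_{g_1}],\dots]$ then compute nonabelian "AND"-like functions: in a nonabelian group, a commutator $[y,z]$ is trivial when either input is trivial but can be nontrivial otherwise. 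Each satisfying assignment will then extend to a fixed number $N$ of homomorphisms, which depends only on the instance size, and nonsatisfying assignments will extend to none. Dividing recovers the $\shP$-hard count.

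The obstacle is exactly this uniform multiplicity: existential auxiliary variables in the gadgets may have solution counts that vary with the assignment. We first try to restrict to a minimal nonabelian section, say a subgroup $H$ with $H/Z$ small nonabelian, where the extension counts are computable. If that fails, we instead use a polynomial-interpolation or Turing-reduction trick. Varying the number of gadget copies produces a system in which the weighted counts form a Vandermonde system, and solving it extracts the number of satisfying assignments. This is why the theorem asserts hardness only via Turing reductions.
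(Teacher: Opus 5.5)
Your membership argument and your abelian case are fine; the abelian case is essentially the paper's Smith-normal-form argument. The gap is the hardness direction for nonabelian $G$, which is a plan rather than a proof. You never fix a $\shP$-hard source problem and never construct the gadgets. You also leave unresolved the difficulty you yourself identify: auxiliary variables can contribute a number of extensions that varies with the assignment. You offer two contingencies for it instead of an argument.

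The one concrete gadget you mention fails for some nonabelian groups. If $|G|$ is odd (e.g.\ the Heisenberg group of order $27$, or $\ZZ/7\rtimes\ZZ/3$), then $x_v^2=1$ forces $x_v=1$, so there is no two-valued variable of that form. Similarly, ``$[y,z]$ can be nontrivial'' only helps once both inputs are pinned to domains containing a specific noncommuting pair. Even then you still need a theorem that the resulting counting CSP is $\shP$-hard.

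The interpolation fix works only under a condition you have not checked: for each assignment of the primary variables, the number of extensions must factor as a product of local gadget weights from a fixed finite set. Even then, it only moves the problem to proving hardness of a weighted counting CSP. That hardness statement is exactly what the Bulatov--Dalmau dichotomy supplies.

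The paper avoids all of this. Using relators of the form $x_{i_1}x_{i_2}=x_{i_3}$, it reduces $\#\CSP(T_G)$ with $T_G=\{(g,h,gh)\}$ parsimoniously to $\#\Hom(-,G)$. No constants are involved, so no pinning or division by $|\mathrm{Aut}(G)|$ is needed. Bulatov--Dalmau then gives $\shP$-completeness unless $T_G$ has a Mal'tsev polymorphism $f$. Two explicit triples of rows of $T_G$ force both $f(x,y,z)=zy^{-1}x$ and $f(x,y,z)=xy^{-1}z$, which is impossible when $G$ is nonabelian.

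Your commutator intuition can be made rigorous with little work. For a graph $H$, homomorphisms from the right-angled Artin group on $H$ to $G$ are exactly graph homomorphisms from $H$ to the commuting graph of $G$, which has all loops present. That graph is connected through the identity and is complete if and only if $G$ is abelian. The Dyer--Greenhill dichotomy for counting graph homomorphisms then gives $\shP$-hardness directly. This is the same non-rectangularity ($(x,1),(1,1),(1,y)\in R_G$ but $(x,y)\notin R_G$) that the paper's RAAG lemma exploits.
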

We note that we take the usual notion of size for a finitely presented group: the sum of its number of generators and the lengths of all of its relations.

As stated, we suspect Theorem \ref{thm:1} might be well known to experts, cf.\ \cite[Lem.~3.8]{larose} or \cite[Thm.~8]{count}.
In particular, similar to what we do in our proof, Nordh and Jonsson use results of Bulatov and Dalmau \cite{BulatovDalmau:dichotomy} to show that a more general problem---namely, counting solutions to systems of equations over $G$---satisfies the same dichotomy \cite{count}; see the end of Subsection \ref{ss:related} for further discussion.

In the details, our proof is a little bit different from Nordh and Jonsson: compare our Lemma \ref{lem:maltsev} with \cite[Thm.~8]{count}.
Moreover, while on one hand our Lemma \ref{lem:maltsevCommutator} and its proof are both essentially paraphrasings of \cite[Thm.~8]{count}, on the other hand,
we use the lemma to arrive at a stronger conclusion: homomorphisms (not just solutions to systems of ``commutation equations" with constants) from right-angled Artin groups to non-abelian $G$ are $\shP$-hard to count.
In Subsection \ref{ss:restrictedInput}, we give two more variations/modifications of our proof of Theorem \ref{thm:1} that lead to other classes of groups $\Gamma$ where counting maps to non-abelian $G$ is hard.
We summarize these three results in the next corollary.

\begin{cor}\label{cor:restrictedInput}
	For a non-abelian group $G$, the hardness conclusion of Theorem~\ref{thm:1} still holds if we promise the input finitely-presented group $\Gamma$ is taken from any one of the following restricted classes:
	\begin{itemize}
		\item right-angled Artin groups (RAAGs),
		\item torsion-free groups satisfying the small-cancellation condition $C'(1/6)$ (in particular, hyperbolic groups),
		\item finite nilpotent groups encoded either via finite presentations or as finite permutation groups, if we furthermore assume that $G$ itself is non-abelian nilpotent.
	\end{itemize}
\end{cor}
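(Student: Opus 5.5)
The plan is to reduce from a known $\shP$-hard counting problem, namely counting solutions of systems of commutation equations with constants over $G$; Lemma \ref{lem:maltsevCommutator} supplies this problem. I then handle the three input classes one at a time, always keeping a polynomial-time Turing reduction. The common tool is a standard trick: constants are eliminated by adding a copy of $G$ (or of a fixed generating tuple of $G$) to the presentation. We count all homomorphisms, and we divide by, or interpolate against, the counts coming from the automorphism orbit of the constant tuple. This makes sense because $\#\Hom$ is additive over where the constant generators are sent.

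\emph{RAAGs.} Given a system of equations $[x_i,x_j]=1$ and $[x_i,g]=1$ with constants $g$ ranging over a generating set $g_1,\dots,g_k$ of $G$, I build a graph. Its vertices are the variables $x_i$ together with new vertices $c_1,\dots,c_k$, and its edges record the required commutations. The homomorphisms from the resulting RAAG to $G$ are exactly the assignments satisfying the commutation constraints, where the $c$'s now range over arbitrary tuples of $G$. Sorting these homomorphisms by the image of the $c$'s, the count is a sum over $k$-tuples $(h_1,\dots,h_k)$ of the number of solutions with those constants. The difficulty is that the $c_i$ do not commute with one another, since we do not add those edges. I will have to isolate the term coming from the tuple $(g_1,\dots,g_k)$, or one of its $\mathrm{Aut}(G)$-conjugates, all of which contribute equally. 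I would first check whether Lemma \ref{lem:maltsevCommutator} already gives hardness for a formulation in which every constant tuple is allowed. If it does not, I would use polynomial interpolation: take several copies of the variables in order to weight the tuples, and apply inclusion–exclusion over subgroups $H\le G$ generated by the $h_i$, so that the count restricted to generating tuples can be solved for.

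\emph{$C'(1/6)$ groups.} I start from the hard presentation obtained in the proof of Theorem \ref{thm:1}, whose relators are words $r_j$. I replace it by a presentation with extra generators $y_j$ and relators $r_j\,w_j(y)$, where the $w_j$ are long, generic, pairwise non-overlapping words. These can be chosen so that the presentation satisfies $C'(1/6)$, no relator is a proper power (which gives torsion-freeness), and the size stays polynomial. The point is to arrange that $\#\Hom(\Gamma',G)$ is an explicit, invertible linear combination of the original count and counts of related instances. An alternative is to let each $w_j$ be a product of commutators or $|G|$-th powers in fresh letters; for instance, $w_j=z_j^{N}$ with $N=|G|\cdot m$ is trivial under every homomorphism only when $z_j^N=1$. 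I expect this to be the main obstacle: small cancellation must be imposed without destroying the ability to recover the original count. What I would try first is adding fresh generators $t_j$ that appear only in relators of the form $r_j\,u_j(t)$. Summing over the images of the $t$'s then gives a count weighted by how many $u_j$-values equal $r_j^{-1}$. This reduces to a generalized problem where each relator is required to land in a prescribed conjugation-invariant subset, and I would argue that this problem is hard by the same polymorphism-based argument.

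\emph{Finite nilpotent inputs.} Here $G$ is nilpotent of class $c$ and non-abelian. Every homomorphism from $\Gamma$ factors through the free nilpotent class-$c$ quotient of exponent $|G|$. Because the hard instances involve only commutation relations, the quotient of the RAAG by $\gamma_{c+1}$ and by $|G|$-th powers is a finite nilpotent group, and it has the same homomorphism count to $G$. I would give it by a finite presentation, namely the RAAG presentation plus the polynomially many relators that suffice when the variables are bounded in number through the instance structure. Alternatively I would give it as a permutation group by realizing it inside a product of copies of $G$ indexed by homomorphisms, although that would be exponential. The second concern is therefore to build a polynomial-size permutation representation, which I would attempt via a faithful action on a polynomial-size polycyclic set, using class $2$ first.
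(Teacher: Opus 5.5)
\textbf{Main gap: the $C'(1/6)$ case.} You correctly flag this as the main obstacle, but you do not resolve it, and none of the concrete options you list work.
\begin{itemize}
\item A single block $w_j=z_j^N$ violates $C'(1/6)$. The cyclic conjugates $z_j^N r_j$ and $z_j^{N-1}r_jz_j$ are distinct relators sharing the prefix $z_j^{N-1}$, a piece of length comparable to the whole relator.
\item Products of commutators in fresh letters are not killed by homomorphisms to a non-abelian $G$.
\item Your fallback with fresh $t_j$ gives a sum over $\phi$ weighted by $\prod_j\#\{t : u_j(t)=\phi(r_j)^{-1}\}$. This is not a $0/1$ constraint, so Theorem~\ref{thm:maltsev}, an unweighted criterion, does not apply as you claim. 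You would also still owe a hardness proof for the new constraint language.
\end{itemize}
The missing idea is to make the appended word both generic and killed by every homomorphism to $G$. Fix a two-letter word $w(x,y)$ of length at least $pk$ whose length-$k$ factors are pairwise distinct (e.g.\ a de Bruijn prefix). Replace each relator $r$ by $r\,w(a_r^{\eta\ell},b_r^{\eta\ell})$, where $a_r,b_r$ are fresh, $\eta$ is the exponent of $G$, and $\ell$ is the maximal relator length. Pieces then have length $O(k\eta\ell)$, while relators have length at least $pk\eta\ell$, and no relator is a proper power. Since every block is an $\eta$-th power, the appended word dies in $G$, so $\#\Hom(\hat\Gamma,G)=|G|^{2|R|}\#\Hom(\Gamma,G)$ exactly, with no interpolation (Lemma~\ref{lem:smallCancellationGroup}). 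Your own ``product of $|G|$-th powers of fresh letters'' also works if you use at least seven \emph{distinct} fresh letters per relator, each raised to a multiple of $|G|$ that is at least $\ell$. Every piece then lies inside a single block or inside $r_j$.

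\textbf{The other two cases.} These are less seriously off, but still incomplete.
\begin{itemize}
\item \emph{RAAGs.} The constants are a detour. Lemma~\ref{lem:maltsevCommutator} concerns the constant-free relation $R_G$, so by Theorem~\ref{thm:maltsev} $\#\CSP(R_G)$ is already $\shP$-hard. An instance of it is literally a graph whose RAAG has exactly as many homomorphisms to $G$ as the instance has solutions. The ``first check'' you propose succeeds and is the whole proof. The interpolation and inclusion--exclusion plan is unnecessary, and it is not worked out: restricting the constants to a subgroup $H$ changes the target group.
\item \emph{Nilpotent inputs, finite presentations.} Your idea matches the paper, but you must name the relators. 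The rank $n$ is unbounded; what keeps the presentation polynomial is that $c$ is fixed. For example, Hall's basis theorem lets you use the basic commutators of weight $c+1,\dots,2c$ together with $\eta$-th powers of basic commutators of weight at most $c$.
\item \emph{Nilpotent inputs, permutation groups.} Your plan is still open. The missing idea is that each basic commutator of weight at most $c$ involves at most $c$ generators. By Hall's theorem, the projections therefore give an embedding $F(X,c)/N\hookrightarrow\prod_{S\in\binom{X}{c}}F(S,c)/N_S$. This is a product of $O(n^c)$ factors, each isomorphic to one of boundedly many fixed finite groups, so it has a permutation representation of polynomial degree. You then take the subgroup generated by the images of the generators.
\end{itemize}
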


For the definitions of right-angled Artin groups and the $C'(1/6)$ condition, see Subsection \ref{ss:restrictedInput}.
Note that groups satisfying the $C'(1/6)$ condition have a word problem that is algorithmically solvable in linear time (via Dehn's algorithm).
RAAGs also have an efficiently solvable word problem \cite{charney,Susan}.
Thus, Corollary \ref{cor:restrictedInput} clarifies Theorem \ref{thm:1} by showing that the hardness of counting maps to a finite group $G$ is not secretly a repackaging of the algorithmic insolubility of the word problem for finitely presented groups.

The dichotomy formulated in Theorem \ref{thm:1} is a warm-up to the kind of dichotomy result we hope will eventually be proved for invariants of 3-dimensional manifolds determined by topological quantum field theories \cite{me:dichotomy}.
We briefly review these motivations in Section \ref{ss:motivations}, but for now, suffice it to say that we would like to know to what extent a result like Theorem \ref{thm:1} might continue to hold if we constrained the input finitely presented group to be of the form $\Gamma = \pi_1(M)$ where $M$ is an orientable, closed 3-manifold triangulation.
Our second main result shows that any similar such dichotomy for such restricted $\Gamma$ will necessarily require that we allow 3-manifolds with ``large'' second cohomology groups.

\begin{mainthm}
	\label{thm:2}
	Fix a finite nilpotent group $G$ of class 2 and some positive constant $C$.
	Let $Z(G)$ be the center of $G$.
	If $M$ is a triangulated, orientable, closed 3-manifold with $|H^2(M,Z(G))|<C$, then the counting problem $M \mapsto \#\Hom(\pi_1(M),G)$ is in $\fP$ (\emph{i.e.}~ it is solvable in polynomial time).
\end{mainthm}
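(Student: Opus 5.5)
We count homomorphisms by lifting through the central extension
\[ 1 \to Z(G) \to G \to G/Z(G) \to 1, \]
in which both $Z = Z(G)$ and $Q = G/Z(G)$ are abelian because $G$ has class $2$. A homomorphism $F:\pi_1(M)\to G$ induces $\bar F:\pi_1(M)\to Q$. Since $Q$ is abelian, $\bar F$ factors through $H_1(M;\ZZ)$, so the set of such $\bar F$ is $\Hom(H_1(M),Q) \cong H^1(M;Q)$. Each fiber over a fixed $\bar F$ is either empty or a torsor for $\Hom(\pi_1(M),Z) = H^1(M;Z)$, because two lifts differ pointwise by a central-valued function that is itself a homomorphism. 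This gives
\[ \#\Hom(\pi_1(M),G) = |H^1(M;Z)| \cdot \#\{\phi \in H^1(M;Q) : \phi \text{ lifts}\}. \]
The quantities $H^1(M;Z)$, $H^1(M;Q)$ and $H_1(M)$ are computable in polynomial time from the triangulation via Smith normal form of the simplicial boundary matrices. What remains is to count the liftable $\phi$.

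The obstruction to lifting $\phi:\pi_1(M)\to Q$ is the class $\phi^*(e)\in H^2(\pi_1(M);Z)$, where $e\in H^2(Q;Z)$ classifies the extension. Because $M$ is aspherical or close to it, $H^2(\pi_1(M);Z)\to H^2(M;Z)$ is injective: the map $M\to K(\pi_1 M,1)$ is $2$-connected, which gives injectivity on $H^2$ via the Hopf exact sequence. So $\phi$ lifts iff $o(\phi) := \phi^*(e)\in H^2(M;Z)$ vanishes, and this can be computed on the triangulation. Concretely, represent $\phi$ by a cellular $1$-cocycle $a$ with values in $Q$, choose a set-theoretic section $s:Q\to G$, and let $c:Q\times Q\to Z$ be the corresponding $2$-cocycle. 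Then $o(\phi)$ is represented by the simplicial $2$-cochain $\sigma=[v_0v_1v_2]\mapsto c(a(v_0v_1),a(v_1v_2))$. Testing whether this is a coboundary is linear algebra over the finite abelian group $Z$, done in polynomial time via Smith normal form.

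The map $\phi\mapsto o(\phi)$ is not a homomorphism; it is quadratic. We may reduce to a bilinear cocycle: since $Q$ and $Z$ are abelian, $e$ can be represented by a cocycle with $c(x,y)$ bilinear up to a coboundary plus a "symmetric" part. More robustly, we avoid linearity altogether. Because $|H^2(M;Z)|<C$, the obstruction takes at most $C$ values. Since $o$ is a quadratic function $H^1(M;Q)\to H^2(M;Z)$ and $H^1(M;Q)$ is a finite abelian group given explicitly as $\bigoplus \ZZ/d_i$, we need to count zeros of a quadratic map into a group of bounded size. The plan is to compute the associated bilinear form $\beta(\phi,\psi)=o(\phi+\psi)-o(\phi)-o(\psi)$, which is the cup product through the commutator pairing $Q\times Q\to Z$ of $G$, together with the values of $o$ on generators. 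Then we count zeros by a Gauss-sum/character argument. For each of the $|H^2(M;Z)|<C$ characters $\chi$ of $H^2(M;Z)$, the quantity $\sum_\phi \chi(o(\phi))$ is a Gauss sum of a $\CC^\times$-valued quadratic function on an explicit finite abelian group. We evaluate it in polynomial time by diagonalizing/reducing the quadratic form: compute its radical, check linearity of $o$ on the radical, and obtain the sum's absolute value from the radical size, with the phase determined by a polynomial-size normal form. Averaging over characters gives the count of liftable $\phi$.

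The main obstacle is this last step: evaluating Gauss sums of quadratic functions on finite abelian groups, including $2$-primary parts where quadratic refinements are subtle, in polynomial time. We would first try reducing prime-by-prime and using symplectic/orthogonal normal forms over $\ZZ/p^k$, computable by a Gram–Schmidt-type elimination. The justification that $o$ is genuinely quadratic with polar form given by cup product and the commutator pairing also needs care, but it should follow from the simplicial formula above.
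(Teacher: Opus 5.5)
Your proof is correct. It shares the paper's skeleton: the torsor count $\#\Hom(\pi_1(M),G)=|H^1(M;Z)|\cdot\#\{\phi\ \text{liftable}\}$, a quadratic obstruction map, and the character/Gauss-sum count of its zeros from Lemma~\ref{lem:gauss}. The paper treats efficient evaluation of the Gauss sums inside that lemma as folklore and simply cites it, so the step you flag as the main obstacle is not actually open.

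Where you genuinely differ is the topological step. The paper first replaces $M$ by a hyperbolic, hence aspherical, manifold $N$ with the same homomorphism count and the same $|H^2(-;A)|$, using the Ennes--Maria twisting construction (Theorem~\ref{thm:clem}). It then applies Theorem~\ref{thm:skeleton} with $\pi_2(N)=0$ to identify simplicial $H^2(N;A)$ with $H^2(\pi_1(N);A)$ on the nose. You instead use the fact that $H^2(\pi_1(M);Z)\to H^2(M;Z)$ is injective for \emph{every} connected complex. This holds because $K(\pi_1(M),1)$ is obtained from $M$ by attaching cells of dimension $\ge 3$, which leaves $C^1$ and $C^2$ unchanged and can only shrink $Z^2$. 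Injectivity is all that is needed to count zeros: you compose the group-cohomological obstruction map with this injection and count zeros of a quadratic map into $H^2(M;Z)$, which is bounded by hypothesis. Your ``aspherical or close to it'' hedge is therefore unnecessary.

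Your route buys a lot. It needs no hyperbolization preprocessing and no check that $\#H^2(N,A)=\#H^2(M,A)$. It also works verbatim for any finite simplicial complex $X$ with $|H^2(X;Z(G))|<C$, not just closed orientable 3-manifolds. The paper's exact identification only matters when the bound is imposed on group cohomology rather than on the given complex, as in its remark on groups given by multiplication tables.

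Finally, your description of the polar form as the cup product through the commutator pairing $(x,y)\mapsto c(x,y)-c(y,x)$ is correct. It is a more explicit version of the K\"unneth argument in Lemma~\ref{lem:quad}.
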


\begin{rmk}
   We are aware that, while the previous version of this theorem
contained a mistake, the current version can be proven in an easier
way. We will address this in a future version of this paper.
\end{rmk}

Here the size of a triangulated 3-manifold $M$ is the total number of all of its simplices.
It is well-known that given $M$ one can prepare a presentation of $\pi_1(M)$ in polynomial time.
We contrast Theorem \ref{thm:2} with a prior result proved by the first author and Greg Kuperberg: counting homomorphisms from triangulated 3-manifolds' fundamental groups to a finite non-abelian simple group is $\shP$-hard under almost parsimonious reduction, even if we restrict to 3-manifolds with trivial second and first (co)homology (that is, even if we restrict to \emph{integer homology 3-spheres}) \cite{me:zombies}.

In Subsection \ref{ss:table}, we discuss how a version of  Theorem \ref{thm:2} also holds for finite groups $\Gamma$ encoded via multiplication tables.

Given Theorems \ref{thm:1} and \ref{thm:2}, it is natural to wonder if, even for class 2 nilpotent groups $G$, the counting problem
\[ M^3 \mapsto \#\Hom(\pi_1(M), G)\]
is $\shP$-hard.
We hope to address this question in future work.
In \S\ref{ss:dihedral}, we report briefly on some related work in progress.

\subsection{Proof sketch for Theorem 1}
\label{ss:sketch}
The proof of Theorem \ref{thm:1} is a straightforward application of well-established dichotomy techniques for generalized counting constraint satisfaction problems ($\#$CSPs) over not-necessarily-binary alphabets, due to Bulatov and Dalmau \cite{BulatovDalmau:dichotomy,Bulatov:dichotomy} (compare with \cite{CreignouHermann}, which gives a dichotomy over binary alphabets).
In brief, for a fixed finite target group $G$, we define a specific ternary relation $T_G$ over the alphabet $G$, which is essentially the multiplication table of $G$:
\[ T_G \defeq \{(g,h,gh) \mid g,h\in G\} \subseteq G\times G \times G.\] 
By padding with additional generators and using substitution, a finite presentation $\Gamma = \langle x_1,\dots,x_n \mid r_1,\dots,r_m\rangle$ can be assumed to have all relations of the form $x_{i_1}x_{i_2}=x_{i_3}$.
Then a homomorphism $\Gamma \to G$ is precisely the same thing as an assignment of generators $x_i$ to elements $g_i$ of $G$ so that $(g_{i_1},g_{i_2},g_{i_3}) \in T_G$ for each relation $r_i$---in other words, a solution to an instance of CSP$(\{T_G\})$.
By \cite[Thm.~5]{BulatovDalmau:dichotomy}, the counting problem $\#$CSP$(\{T_G\})$ is $\shP$-hard (via Turing reduction) if $T_G$ does not support a Mal'tsev polymorphism.
One concludes with an elementary argument that $T_G$ has a Mal'tsev polymorphism if and only if $G$ is abelian.
We provide the full details in Section \ref{sec:thm1}, along with variations on this proof that establish Corollary \ref{cor:restrictedInput}.
We note that the proof for RAAGs we provide is arguably even simpler than that for general finitely presented groups (notwithstanding that the RAAG case implies the general case); however, as our two proofs require applying the results of Bulatov and Dalmau in slightly different ways, we find it instructive to include both.

\subsection{The algorithmic strategy for Theorem 2}
\label{ss:strategy}
Two general ideas underpin the algorithm necessary to establish Theorem \ref{thm:2}.
First, we can always reduce the problem of counting maps with target a class 2 nilpotent group $G$ to obstruction-theoretic problems in group cohomology.\footnote{In fact, this kind of idea was explored for the more general problem of counting maps to finite \emph{solvable} $G$ in \cite{MateiSuciu}, but no complexity bounds were provided.  In a similar vein, \cite{Chen} asserts that for central extensions built from finite abelian groups $A$ and $Q$ there is an ``efficient" criterion for testing if an endomorphism of $Q$ lifts to a homomorphism of central extensions.
}
Second, if $\Gamma$ is encoded in a manner that allows us efficient access to an explicit dense model for its Eilenberg-MacLane classifying space $BG = K(G,1)$ (or at least its 3-skeleton) and $H^2(\Gamma,Z(G))$ is not too large, then we can solve these obstruction problems efficiently using ``Fourier analysis" over finite abelian groups; see Theorem \ref{thm:skeleton} for a precise statement.

Let us expand on these points now.

If $G$ is any (finite) nilpotent group of class 2 with center $A\defeq Z(G)$, then $G$ is determined (up to isomorphism) by the abelian groups $A$ and $B\defeq G/A$, and a normalized 2-cocycle $\eta \in Z^2(B, A)$.
If we write the group operations of $A$ and $B$ additively, this means $\eta: B \times B \to A$ satisfies the cocycle equation
\[\eta(b_1,b_2)-\eta(b_1,b_2+b_3)+\eta(b_1+b_2,b_3)-\eta(b_2,b_3) = 0 \qquad (\forall b_1,b_2,b_3 \in B) \]
and is normalized in the following sense:
\[ \eta(b,0)=\eta(0,b) = 0 \qquad (\forall b \in B) .\]
In terms of this data, we can identify $G$ with the \emph{set} $A\times B$ equipped with the ``$\eta$-deformed" binary operation
\[ (a_1,b_1)\cdot (a_2,b_2) = (a_1 + a_2 + \eta(b_1,b_2), b_1+b_2).\]

Any homomorphism $\phi: \Gamma \to G$ induces a homomorphism $\overline{\phi} \defeq \pi_A \circ \phi: \Gamma \to B$, where $\pi_A: G \to B$ projects out $A$.  Thus,
\[ \#\Hom(\Gamma, G) = \sum_{\psi \in \Hom(\Gamma, B)} \#\{\phi \in \Hom(\Gamma, G) \mid \overline{\phi} = \psi\}\]
where $\Hom(\Gamma, B)$ is a finite abelian group because $B$ itself is.
By well-known properties of group cohomology, a given homomorphism $\psi: \Gamma \to B$ admits a lift to a homomorphism $\phi: \Gamma \to G$ with $\overline{\phi}=\psi$ if and only if the pullback cohomology class $[\psi^*\eta]=0 \in H^2(\Gamma,A)$ vanishes in the cohomology of $\Gamma$ with coefficients in $A$.
Critically, because $G$ is a \emph{central} extension of $B$ by $A$, anytime $A$ appears as the coefficients in a cohomology group, it is with a \emph{trivial} module structure.
This implies that we have a well-defined set-theoretic map
\[
	\begin{aligned}
		(-)^*: \Hom(\Gamma,B) &\to H^2(\Gamma,A) \\
		\psi &\mapsto \psi^*[\eta] = [\psi^*\eta],
	\end{aligned}
\]
which is \emph{not} a homomorphism typically, but rather, a \emph{quadratic} map; see Lemma \ref{lem:quad}.

Moreover, if any lift of $\psi$ exists, then (again because $G$ is a central extension) the set of all such is a torsor for $H^1(\Gamma,A) = \Hom(\Gamma, A)$, which is itself an abelian group.\footnote{
Recall that a \emph{torsor} (or \emph{affine space)} for a group $H$ is a set $X$ together with an action of $H$ on $X$ that is both free and transitive.  
In particular, if $X$ is an $H$-torsor then $|H|=|X|$.
}
We see then that for each liftable map $\psi: \Gamma \to B$, we get $\#\Hom(\Gamma, A)$ many lifts.
In other words,
\[ \#\Hom(\Gamma, G) = \#\Hom(\Gamma,A) \times \#\{ \psi \in \Hom(\Gamma, B) \mid [\psi^*\eta]=0 \in H^2(\Gamma, A)\}.\]
The factor $\#\Hom(\Gamma,A)$ can be computed in polynomial time for any input $\Gamma$, and so we are left with the problem of counting the number of solutions to the ``obstruction problem" $[\psi^* \eta]=0$.
This is a quadratic condition on $\psi$, but, \emph{a priori}, we do not even have a concrete understanding of $H^2(\Gamma,A)$ from the presentation $\Gamma$.
Indeed, unless we know something more about $\Gamma$, the only way to get our hands on $H^2(\Gamma,A)$ requires working with the typically infinitely-generated free $A$-module $Z^2(\Gamma,A)$ \cite[\S7.6.3]{Holt:handbook}.
Even worse, Theorem \ref{thm:1} implies this problem cannot be surmounted effectively in general.

\begin{cor}
	\footnote{We note that our methods do not establish $\NP$-hardness of the \emph{existence} problem $\#\{ \psi \mid [\psi^*\eta]=0 \in H^2(\Gamma,A)\} \stackrel{?}{>} 1$.
In fact, the dichotomy for the existence problem cannot possibly be the same as that for the counting problem.
Note that the existence problem can be rephrased to the following equivalent problem: for a fixed class 2 nilpotent group $G$ and input group $\Gamma$, decide if $\Gamma$ admits a homomorphism to $G$ whose image is not contained in the center of $G$.
When $G=D_8$ is the dihedral group of order 8, for example, a short argument shows that $\Gamma$ admits such a homomorphism to $G$ if and only if $\Gamma$ admits a  surjection to $\ZZ/2\ZZ$ (the only simple factor involved in $D_8/Z(D_8)$).
This latter question can be resolved in polynomial time (see e.g.~\cite{ElderShenWeiss}).
We leave further exploration of these matters as a possible direction for future work.}
	\label{cor:obstruction}
Fix finite abelian groups $A$ and $B$, along with a cocycle $\eta: B \times B \to A$ such that $A \times_\eta B$ is not abelian.
If $\Gamma$ is a finitely presented group, then it is $\shP$-complete to count
\[  \#\{ \psi \in \Hom(\Gamma,B) \mid [\psi^* \eta] = 0 \in H^2(\Gamma,A). \}\]
\qed
\end{cor}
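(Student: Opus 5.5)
The plan is to derive the corollary directly from Theorem~\ref{thm:1} via the counting identity established just above:
\[ \#\Hom(\Gamma, G) = \#\Hom(\Gamma,A) \cdot \#\{ \psi \in \Hom(\Gamma, B) \mid [\psi^*\eta]=0 \in H^2(\Gamma, A)\}, \]
where $G = A\times_\eta B$. One caveat: $A$ need not be the full center of $G$. The identity only uses that $A$ is a central subgroup, i.e.\ that $G$ is a central extension of $B$ by $A$ with trivial action. The lifting criterion and the $H^1(\Gamma,A)$-torsor structure on lifts hold for any central extension, so the identity applies verbatim. I would state this explicitly at the start.

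\textbf{Hardness.} Since $G$ is nonabelian by hypothesis, Theorem~\ref{thm:1} shows that $\Gamma\mapsto\#\Hom(\Gamma,G)$ is $\shP$-hard under Turing reductions. Given an oracle for the obstruction count $N(\Gamma)$, I compute $\#\Hom(\Gamma,G)$ as follows.
\begin{enumerate}[(i)]
\item Compute $\#\Hom(\Gamma,A)$ in polynomial time. Abelianize the presentation to get an integer relation matrix, then compute its Smith normal form, which is polynomial time. From the invariant factors $d_i$ (including zeros for free rank), $\#\Hom(\Gamma,A)=\prod_i \#A[d_i]$, with $A[0]=A$. This is exactly the abelian case of Theorem~\ref{thm:1}.
\item Query the oracle once for $N(\Gamma)$ and multiply.
\end{enumerate}
This is a polynomial-time Turing reduction, so counting the obstruction solutions is $\shP$-hard.

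\textbf{Membership.} A nondeterministic machine guesses $\psi$ on the generators, i.e.\ an element of $B^n$, and checks that the relators map to $0$ in $B$. It then accepts with multiplicity equal to the number of lifts. Concretely, it guesses a lift $\phi\in\Hom(\Gamma,G)$ and checks the relators in $G$, so the number of accepting paths is $\#\Hom(\Gamma,G)$. Since $N(\Gamma)=\#\Hom(\Gamma,G)/\#\Hom(\Gamma,A)$, the function $N$ is computable in polynomial time with a single $\shP$ call followed by a division. This places it in $\shP$ up to Turing reduction, which is the sense of completeness used in Theorem~\ref{thm:1}. If genuine $\shP$ membership is required, I would use the cleaner route: the number of pairs $(\psi,\text{lift})$ counts lifts, so I would instead count homomorphisms $\phi$ whose values on a fixed canonical generating set of $\Hom(\Gamma,A)$-orbits are normalized. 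That is more delicate, so I would settle for Turing-completeness.

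\textbf{Main obstacle.} The main difficulty is bookkeeping rather than mathematics: confirming that the paper's identity holds when $A$ is merely central rather than the full center. This needs the correspondence between lifts of $\psi$ and trivializations of $\psi^*\eta$, with lifts differing by elements of $Z^1(\Gamma,A)=\Hom(\Gamma,A)$. It also needs the check that the oracle's input encoding (a finite presentation) matches that of Theorem~\ref{thm:1}, so the reduction passes $\Gamma$ through unchanged.
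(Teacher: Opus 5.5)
Your proposal is correct and matches the paper's implicit argument: the counting identity for the central extension $G=A\times_\eta B$, the polynomial-time computability of $\#\Hom(\Gamma,A)$, and Theorem~\ref{thm:1}. Your point that $A$ need only be central, not all of $Z(G)$, is a valid clarification the paper leaves tacit. You also need not settle for a Turing-style membership. Write a lift as $x_i\mapsto (f_i,\psi(x_i))$; the relators then force a linear system $Mf=-c(\psi)$ over the fixed group $A$, where $M$ is the integer exponent-sum matrix of the relators and $c(\psi)$ comes from evaluating the relators on $x_i\mapsto(0,\psi(x_i))$. Solvability of this system is decidable in polynomial time via Smith normal form, so $\psi$ itself is a polynomial-time checkable witness and the count lies in $\shP$ literally.
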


This is where the second key idea enters: ``most" 3-manifolds $M$ are Eilenberg-MacLane spaces $K(\pi_1(M),1)$ for their fundamental groups $\pi_1(M)$.
In particular, if $M$ is a \emph{triangulated} 3-manifold that is Eilenberg-MacLane, then we can use the \emph{simplicial cohomology} $H^2(M,A)$ to compute the \emph{group cohomology} $H^2(\pi_1(M),A)$ in polynomial-time.  Assuming we also know $|H^2(\pi_1(M),A)| < C$, then we can furthermore count the solutions to $[\psi^*\eta]=0$ efficiently; see Theorem \ref{thm:skeleton}, which we expect will be of independent interest.\footnote{
	Example 1 in Section VIII.9 of \cite{Brown:book} gives a simpler application of such observations.
}
The only speed bump with this second idea is that not all 3-manifolds are actually Eilenberg-MacLane.
Fortunately, using techniques first introduced in \cite{me:HQFT} and recently expanded by Ennes and Maria \cite{Clem:HQFT}, we may efficiently ``pre-process" $M$ to replace it with an Eilenberg-MacLane $3$-manifold $N$ such that both $\#\Hom(\pi_1(M),G) = \#\Hom(\pi_1(N),G)$ and $\#H^2(M,A)=\#H^2(N,A)$; see Theorem \ref{thm:clem}.

\subsection{Remarks on finite domain groups encoded via multiplication tables}
\label{ss:table}
While we will refrain from stating and proving another theorem, we note that a version of Theorem \ref{thm:2} also holds for finite groups $\Gamma$ that are encoded via multiplication tables.
This is because from such encodings we may efficiently build the first three terms of the bar resolution of the trivial $\Gamma$ module $\mathbb{Z}$ efficiently.
In almost equivalent language: given that we have access to the entire table for $\Gamma$, we may build the three skeleton of the Eilenberg-MacLane space $K(\Gamma,1)$ in polynomial time.
Once one has this, Theorem \ref{thm:skeleton} can be used.

\subsection{Complexity curiosities from the dihedral group of the square}
\label{ss:dihedral}
In a follow-up collaboration with Chlopecki \cite{me:dihedral}, the first author digs further into the tension between Theorems \ref{thm:1} and \ref{thm:2} by considering the special case $G=D_8$, the dihedral group of order $8=2^3$.
Since $D_8$ is (one of) the smallest non-abelian nilpotent groups, it is natural to wonder if the difficulty of counting maps $\Gamma \to D_8$ for finitely presented $\Gamma$ is due to the difficulty of computing the universal class 2 $2$-group quotient of $\Gamma$ \cite{Holt:handbook,Nickel}, or if, instead, the difficulty is something ``more intrinsic" to class 2 nilpotent groups.\footnote{
We note that \cite{Holt:handbook,Nickel} discuss general algorithms for computing universal $p$-group quotients, but they do not carry out any complexity analysis.  Our argument in \S\ref{ss:restrictedInput} suggests it should be polynomial time. }

To this end, \cite{me:dihedral} considers the case that $\Gamma$ itself is provided in such a manner that it is obviously a \emph{finite} 2-group of class 2.
The main result is that for a certain class of such $\Gamma$ encoded by a 3-tensor over the finite field $\mathbb{F}_2$, counting maps to $D_8$ is Turing equivalent to counting solutions to \emph{systems of non-homogeneous symmetric bilinear equations over $\mathbb{F}_2$}.
Intriguingly, as far as we are aware, the complexity of this latter problem has not been studied before (either in existence or counting forms); in \cite{me:dihedral}, we prove it is hard.

We note that the closely related \emph{isomorphism} decision problem for finite class 2 $p$-groups is a good candidate for having intermediate complexity (modulo collapse), as it is equivalent in power to $\mathsf{TensorIsomorphism}$ over $\mathbb{F}_2$ \cite{GrochowQiao}.
We leave further discussion, including the connection to MinRank of 3-tensors \cite{BlaserETAL}, to \cite{me:dihedral}.

\subsection{Related problems and results}
\label{ss:related}
For a fixed, finite target group $G$, there are four interesting flavors of homomorphism problems we can consider, each of which is only turned into a precise problem after we specify the encoding of $\Gamma$ (e.g., finite presentation, $\pi_1(M^3)$, finite via Cayley table, (finite) matrix group, permutation group, \emph{etc.}):\footnote{
	One might also ask about finding/counting \emph{isomorphisms} with $G$, but since then we would need $\Gamma$ itself to be finite to have much hope for interesting complexity theory (recall ``finite" is an undecidable property of finitely presented groups), we then find ourselves in the case of what one might call the Group Recognition problem for finite groups.
This is one slice of the Group Isomorphism problem, which is a very well-studied problem with a huge literature that we shall avoid attempting to review here.}

\begin{enumerate}
	\item { Does there exist a nontrivial homomorphism to $G$?}
	\item { Does there exist a surjection to $G$?}
	\item { How many homomorphisms are there to $G$?}
	\item { How many surjections are there to $G$?}
\end{enumerate}

Whenever $G$ is abelian, all four flavors have efficient algorithmic solutions for general finite presentations $\Gamma$.

As far as we are aware, the first explicit hardness results for all 4 flavors were established by Kuperberg and the first author in \cite{me:zombies}.
That work establishes for any non-abelian simple $G$ the problems (1) and (2) are $\NP$-hard and the problems (3) and (4) are $\shP$-hard for $\Gamma = \pi_1(M^3)$ (hence, for general finitely presented $\Gamma$ as well).

Work of Elder, Shen, and the second author \cite{ElderShenWeiss} shows that problem (2) is $\NP$-hard when $\Gamma$ is finitely presented and $G=D_{2n}$ is any non-nilpotent dihedral group.
While this reduction does not seem to exhibit enough parsimony to deduce $\shP$-hardness of the analogous counting problem (4), we believe that using the same approach to devise a direct reduction from \#3-SAT to (4) can lead to an ``almost" parsimonious reduction (in the same sense of \cite{me:zombies}).

Of course, Theorem \ref{thm:1} completely solves (3) for finitely presented $\Gamma$.
However, as far as we know, similar dichotomy results for the other three flavors of problems are wide open (for all the classes of $\Gamma$ we have considered so far).
Two of the most interesting open cases are when $G=D_6=S_3$ (smallest non-nilpotent group) or $G=D_{16}$ (one of the smallest nilpotent groups of class 2), and $\Gamma = \pi_1(M^3)$.
In other words, when $G$ is fixed to be either $D_6$ or $D_8$, is it hard to count maps $\pi_1(M^3) \to G$? 
Note that topological quantum computing with $D_6$ is known to be $\mathsf{BQP}$-universal \cite{Shawn};
it is natural to wonder if the same might be true for $D_8$.

Rather than look for homomorphisms to $G$, we might try to solve systems of equations over $G$.
Indeed, the homomorphisms from a finitely presented $\Gamma$ to $G$ can be understood as the solutions to the system of equations over $G$ with one variable for each generator of $\Gamma$ and an equation for each relation.
A general system of equations also allows for \emph{constants} from $G$ in the equations.
Goldmann and Russell showed that whenever $G$ is non-abelian, deciding existence of solutions to systems of equations is $\NP$-complete \cite{GoldmannRussell}.\footnote{
	In \cite{GoldmannRussell}, the authors also consider the special case of solving a \emph{single} equation over a group $G$.
	This case is more subtle; \emph{cf.}\ \cite{IdziakETAL} for what is currently known.
	The homomorphism problem analog of single equation satisfiability is when $\Gamma$ is a presentation with a single relation.
}
As mentioned already above, Nordh and Jonsson then showed that counting solutions to systems of equations is hard whenever $G$ is non-abelian \cite{count}.

Finally, we note that group homomomorphism problems with a fixed target $G$ can be understood as a natural generalization of graph homomorphism problems with a fixed target graph $H$ (which is itself a generalization of graph coloring problems).
Hell and Ne{\v{s}}et{\v{r}}il showed that existence problem for graph homomorphisms to $H $ is $\NP$-hard except when $H$ is bipartite or has a loop \cite{hell1990complexity}, in which cases the problem is in $\p$.
Dyer and Greenhill showed the counting problem is $\shP$ complete $H$ unless every connected component of $H$ is an isolated vertex without a loop, a complete graph with all loops present, or a complete unlooped bipartite graph \cite{DyerG00}.

\subsection{Motivations}
\label{ss:motivations}
While we believe it is natural enough to want to understand the complexity of finding and enumerating homomorphisms between groups, as suggested above, our own idiosyncratic motivations come from topological quantum computation using $(2+1)$-dimensional topological quantum field theories (TQFTs).
Briefly, the idea here is that each fixed unitary modular fusion category $\mathcal{B}$ yields---via the Reshetikhin-Turaev construction \cite{RT:paper,T:category}---a (once-extended) unitary $(2+1)$-dimensional TQFT $Z_\mathcal{B}$ where the transformations associated to mapping classes of surfaces can be used for fault-tolerant quantum computation \cite{Kitaev,Freedman}.
Morever, there exist categories $\mathcal{B}$ where such operations are \emph{$\mathsf{BQP}$-universal} for quantum computing, meaning that \emph{any} quantum computation can be encoded in them \cite{FreedmanLarsenWang}.
It is an interesting and well-known problem of ongoing research to determine exactly which modular fusion categories $\mathcal{B}$ support such universal operations, see e.g.\ \cite{Rowell:paradigm,RowellNaidu,me:dichotomy}.
We refer the reader to \cite{RowellWang} for a big picture review of such matters, aimed at mathematicians.

To understand how the present paper fits into this story, there are only two points the reader needs to accept.
First: in almost all known cases where a modular fusion category $\mathcal{B}$ supports $\mathsf{BQP}$-universal operations, the associated closed 3-manifold invariants $Z_\mathcal{B}(M) \in \CC$ are $\shP$-hard \cite{Greg,me:dichotomy}. 
Second: the simplest non-trivial family of modular fusion categories are those of the form $\mathcal{B} = DG$-mod, the category of representations of a (untwisted) Drinfeld double Hopf algebra $DG$ associated to a finite group $G$.

The first point suggests that it is reasonable to treat the problem of determing the computational complexity of the 3-manifold invariant $M^3 \mapsto Z_\mathcal{B}(M^3)$ as a kind of warm-up to the problem of understanding whether or not $\mathcal{B}$ supports $\mathsf{BQP}$-universal operations.
The second point is relevant because for such modular fusion categories, it is well-known that we have
\[ Z_{DG\text{-mod}}(M) = \frac{\#\Hom(\pi_1(M),G)}{\#G}.\]
Combining all of this, we can see the problem of understanding (for a fixed finite group $G$) the computational complexity of $M^3 \mapsto \#\Hom(\pi_1(M^3),G)$ as a step along the way to understanding the $\mathsf{BQP}$-universality of the operations supported by the TQFT associated to a modular fusion category $\mathcal{B}$.

We conclude this section by explaining briefly why \emph{3-dimensional} manifolds happen to be the focus of so much of our attention.
There is a set of group-theoretical facts that gives one explanation, and a set of TQFT-related facts that gives another.

Here are the relevant group theory facts.
For any fixed $d>3$, it is well-known there exists a polynomial time algorithm to convert a finitely presented group $\Gamma$ to a closed PL $d$-manifold $M_\Gamma^d$ with $\pi_1(M_\Gamma^d) \cong \Gamma$.
So the problem $\pi_1(M^d) \mapsto \#\Hom(\pi_1(M^d),G)$ is not significantly different from the general problem $\Gamma \mapsto \#\Hom(\Gamma,G)$.
In contrast, when $d=3$, the fundamental groups of 3-manifolds are quite constrained; see e.g.\ \cite{3manifolds}.
Going even lower, when $d=2$, the fundamental groups of $d$-manifolds are quite well understood---so much that there is an efficient algorithm to compute $\Hom(\pi_1(M^2),G)$ for any fixed finite group $G$ and any triangulated surface $M^2$; one way to see this is via ``Mednykh's formula" \cite{MednykhOG,Mednykh}.\footnote{While our work here focuses mostly on \emph{closed} manifolds, complements of knots and links in $S^3$ are another important type of manifold that are commonly used in topological quantum computation, cf.\ \cite{Kitaev,RowellNaidu,RowellWang,me:coloring}.
	With this in mind, a theorem of Eisermann is especially pertinent: for fixed nilpotent $G$, the count $K \mapsto \#\Hom(\pi_1(S^3 \setminus K), G)$ is a constant independent of the knot $K$ \cite{Eisermann}.
This result might be understood as a kind of predecessor to our Theorem \ref{thm:2}.}

The relevant TQFT facts are likewise divided between low- and high-dimensional cases.
First, in dimensions less than $2+1$, TQFT invariants of manifolds are always computable in polynomial time (for similar reasons as Mednkyh's formula, combined with the fact that the topological type of a triangulated surface can be efficiently identified).
Second, in dimensions greater than $2+1$, the consensus among experts is that most examples of (semi-simple) TQFTs are explainable via classical algebraic topological invariants, such as homotopy type, signature, \emph{etc}., and, in particular, don't look so far off from invariants of the type $M \mapsto \#\Hom(\pi_1(M),G)$.
See e.g.\ \cite{CraneYetterKauffman,Reutter,fusion2} for results making such ideas precise in the case of $3+1$-dimensional TQFTs.
In particular, for now, $(2+1)$-d TQFTs are the sweet spot where there are compelling cases for both $\mathsf{BQP}$-universality and the feasability of actually engineering them.

\subsection{Outline}
\label{ss:outline}
We prove Theorem \ref{thm:1} and Corollary \ref{cor:restrictedInput} in Section \ref{sec:thm1}, and Theorem \ref{thm:2} in Section \ref{sec:thm2}.

\section{The dichotomy for finitely presented input}
\label{sec:thm1}
In this section we prove Theorem \ref{thm:1} and Corollary \ref{cor:restrictedInput}.
The first two subsections quickly review definitions relating to counting CSPs and polymorphisms, with the goals of explaining the connection to group homomorphism counting and clearly formulating the key tool \cite[Thm.~5]{BulatovDalmau:dichotomy} (restated below as Theorem \ref{thm:maltsev}).
In the third subsection, we conclude the proof by showing that the relevant ternary relation for counting homomorphisms from a finitely presented group $\Gamma$ to a fixed finite target group $G$ admits a Mal'tsev polymorphism if and only if $G$ is abelian.
In the fourth and final subsection, we give variations of this argument that prove Corollary \ref{cor:restrictedInput}.

\subsection{Relational counting CSPs}
\label{ss:CSP}
The relational counting constraint satisfaction problem $\#\CSP(A,\calR)$ is specified by the following parameters:
\begin{itemize}
	\item a finite \emph{alphabet} (or \emph{domain}) set $A$
	\item a finite set $\calR=\{R_1,\dots,R_k\}$ of \emph{constraint types} where each $R_i$ is some $n_i$-ary relation on $A$:
		\[R_i \subseteq A^{n_i}.\]
\end{itemize}
In the case that $A$ is understood and $\calR =\{R\}$ consists of a single $n$-ary relation, we will write $\#\CSP(R)$ instead of the more notationally burdensome $\#\CSP(A,\calR)$.

An \emph{instance} $X=(V,C)$ of $\#\CSP(A,\calR)$ is a list $V=(x_1,\dots,x_k)$ of variables over $A$, together with a list $C=((C_1,\iota_1),\dots,(C_l,\iota_l))$ where each $C_i \in \calR$ is a constraint type of arity $n_i$ and $\iota_i: \{1,\dots,n_i\} \to \{x_1,\dots,x_k\}$ is a way of assigning variables from $V$ to the inputs of $C_i \subseteq R^{n_i}$.
(Note that repetitions are okay, \emph{i.e.}~ $\iota_i$ need not be injective.)
If we let $\chi_{C_i}: A^{n_i} \to \{0,1\}$ be the indicator function for $C_i$ and define
\[\begin{aligned}
	\iota_i^* \chi_{C_i}: A^k &\to \{0,1\} \\
	(a_1,\dots,a_k) &\mapsto \chi_{C_i}(a_{\iota_i(1)},\dots,a_{\iota_i(n_i)})
\end{aligned}\]
then the value of $\#\CSP(A,\calR)$ on $X$ is defined to be
\[\#\{(a_1,\dots,a_k) \in A^k \mid \iota_i^*\chi_{C_i}(a_1,\dots,a_k) = 1 \text{ for all } (C_i,\iota_i) \in C\}.\]
In plain language: the value of $\#\CSP(A,\calR)$ on $X=(V,C)$ counts the number of solutions to $X$---\emph{i.e.}~ the number of ways of assigning the variables from $V$ to elements of $A$ in such a way that every relational constraint from $C$ is satisfied.

For the purposes of Theorem \ref{thm:1}, there is really only one example we need to understand (although we will consider a new example when we consider RAAGs for Corollary \ref{cor:restrictedInput}).
Consider the finite group $G$ as an alphabet set and define the following 3-ary relation on $G$:
\[ T_G \defeq \{ (g, h, gh) \mid g, h \in G\}.\]

\begin{lem}
	\label{lem:CSP}
There is a parsimonious reduction from $\#\CSP(T_G)$ to $\#\Hom(-,G)$.
\end{lem}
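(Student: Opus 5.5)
Given an instance $X=(V,C)$ of $\#\CSP(T_G)$ with variables $V=(x_1,\dots,x_k)$ and constraints $(T_G,\iota_1),\dots,(T_G,\iota_l)$, we form the finitely presented group
\[ \Gamma_X \defeq \langle x_1,\dots,x_k \mid x_{\iota_i(1)}x_{\iota_i(2)}x_{\iota_i(3)}^{-1},\ i=1,\dots,l\rangle .\]
Because each constraint contributes a single relator of length three, $\Gamma_X$ has size $k+3l$, linear in the size of $X$, and it is written down in polynomial (indeed linear) time. The reduction is the map $X\mapsto\Gamma_X$, with no post-processing of the output count.

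Parsimony follows from the universal property of presentations. A homomorphism $F:\Gamma_X\to G$ is determined by the tuple $(F(x_1),\dots,F(x_k))\in G^k$. A tuple $(a_1,\dots,a_k)\in G^k$ arises from some homomorphism if and only if every relator maps to the identity, that is, $a_{\iota_i(1)}a_{\iota_i(2)}=a_{\iota_i(3)}$ for every $i$. This is exactly the condition $(a_{\iota_i(1)},a_{\iota_i(2)},a_{\iota_i(3)})\in T_G$, i.e.\ $\iota_i^*\chi_{T_G}(a_1,\dots,a_k)=1$. Hence $F\mapsto(F(x_j))_j$ is a bijection between $\Hom(\Gamma_X,G)$ and the solution set of $X$. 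It follows that the value of $\#\CSP(T_G)$ on $X$ equals $\#\Hom(\Gamma_X,G)$.

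The only point needing care is that $\iota_i$ need not be injective, so variables may repeat within a constraint (for instance $x_1x_1=x_1$). The relator is then just a word with repeated letters, such as $x_1x_1x_1^{-1}$, and the bijection argument above is unaffected. Variables that appear in no constraint become free generators, which correctly contribute a factor $|G|$ each. I do not expect any real obstacle here.
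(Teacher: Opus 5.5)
Your proposal is correct and follows the paper's proof: you encode each constraint as the relation $x_{\iota_i(1)}x_{\iota_i(2)}=x_{\iota_i(3)}$ and use the bijection between homomorphisms $\Gamma_X\to G$ and solutions of $X$. Your remarks on non-injective $\iota_i$ and on variables that appear in no constraint are accurate details that the paper leaves implicit.
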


\begin{proof}
Let $X=(V,C)$ be an instance of $\#\CSP(T_G)$.
We use this data to write down a finitely presented group $\Gamma_X$.
The generators of $\Gamma_X$ are the variables $x_1,\dots,x_k$ in $V$.
Each $(C_i,\iota_i) \in C$ determines a relation $r_i$ of $\Gamma_X$ as follows.
Since $\calR = \{T_G\}$ consists of a single constraint type, $C_i=T_G$ and $n_i=3$ for all $i=1,\dots,l$.
Suppose $\iota_i(j) = i_j$ for $j=1,2,3$.
Then $r_i$ is the relation
\[ x_{i_1}x_{i_2} = x_{i_3}.\]
Clearly, given $X$, we can write down $\Gamma_X$ in polynomial time.
Moreover, by construction, the set of homomorphisms $\Gamma_X \to G$ is in bijection with the solutions to $X$.
\end{proof}

\subsection{Mal'tsev polymorphisms}
\label{ss:maltsev}
Let $R \subseteq A^n$ be an $n$-ary relation on the domain $A$.
A \emph{polymorphism} of $R$ is a $d$-ary operation $f: A^d \to A$ (for some $d$) such that whenever $(x_{1,1},\dots,x_{1,n}),\dots,(x_{d,1},\dots,x_{d,n}) \in R$, it follows that
\[ (f(x_{1,1},\dots,x_{d,1}),\dots,f(x_{n,1},\dots,x_{n,d})) \in R.\]
A polymorphism is called \emph{Mal'tsev} if $d=3$ and $f(x,x,y)=f(y,x,x)=y$ for all $x,y \in A$. 
We say that $\#\CSP(A,\calR)$ admits a Mal'tsev polymorphism if there exists a simultaneous Mal'tsev polymorphism for each of the $R_i \in \calR$.

\begin{thm}[Thm.~5 of \cite{BulatovDalmau:dichotomy}]
	\label{thm:maltsev}
If $\#\CSP(A,\calR)$ does not admit a Mal'tsev polymorphism, then it is $\shP$-complete (under Turing reduction).
\qed
\end{thm}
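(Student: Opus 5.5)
Since the statement is the result of Bulatov and Dalmau quoted verbatim, my plan is to reconstruct their argument rather than find a new one. It has three stages:
\begin{enumerate}
\item a closure lemma for counting problems;
\item an algebraic characterization of Mal'tsev polymorphisms by ``rectangularity'';
\item a reduction from the known $\shP$-hard problem of counting homomorphisms to a bipartite graph that is not a disjoint union of complete bipartite graphs.
\end{enumerate}
Membership in $\shP$ is immediate, so only hardness needs proof.

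\textbf{Step 1: closure under primitive positive definitions.} Suppose $R$ is defined from $\calR$ by a primitive positive formula $R(\bar x)\iff \exists \bar y\, \bigwedge_j R_{i_j}(\ldots)$. I will show $\#\CSP(A,\calR\cup\{R\})\le_T \#\CSP(A,\calR)$. Naive substitution of the defining formula counts each solution weighted by its number of witnesses $\bar y$, and that number can vary between $1$ and $|A|^{|\bar y|}$. I would remove this weighting by polynomial interpolation.
\begin{itemize}
\item For $t=1,\dots,N$, build an instance in which each $R$-constraint is replaced by $t$ independent copies of the witness gadget.
\item A solution whose constraints have witness counts $w_1,\dots,w_l$ then contributes $\prod_i w_i^t$.
\item Group solutions by the multiset of values $w_i$. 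There are polynomially many such classes because the $w_i$ range over a fixed finite set. The oracle answers then form a Vandermonde-type system whose unknowns are the class sizes.
\item Solving this system recovers the number of genuine solutions.
\end{itemize}
Equality constraints and constant-free pinning are handled the same way.

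\textbf{Step 2: Mal'tsev versus rectangularity.} Call a relation $R\subseteq A^n$ \emph{rectangular} if, for every split of its coordinates into blocks $I\sqcup J$, the following holds: whenever $(u,v),(u,v'),(u',v)\in R$, also $(u',v')\in R$. Here $u,u'$ are tuples on $I$ and $v,v'$ are tuples on $J$. I would prove that $\calR$ admits a Mal'tsev polymorphism if and only if every relation pp-definable from $\calR$ is rectangular.
\begin{itemize}
\item The forward direction is a one-line computation: apply the polymorphism $f$ coordinatewise to the three tuples $(u',v)$, $(u,v)$ and $(u,v')$.
\item For the converse, I would use the standard Galois/free-algebra argument. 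Take the pp-definable relation $F\subseteq A^{A^2}$ consisting of all ternary polymorphisms, viewed as the subpower generated by the three projection tuples indexed by $A^2$. Rectangularity of $F$ with respect to a suitable split forces the existence of an $f$ with $f(x,x,y)=y=f(y,x,x)$.
\end{itemize}
I expect this converse to be the main obstacle. The choice of index blocks must be made carefully, and one must iterate over pairs to enforce both Mal'tsev identities simultaneously, so that the two identities do not have to be obtained by separate operations.

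\textbf{Step 3: hardness from a non-rectangular relation.} Suppose there is no Mal'tsev polymorphism. By Step 2 there is a pp-definable non-rectangular $R$ with a split $I\sqcup J$. I will regard $R$ as a bipartite graph $H$ between $A^I$ and $A^J$ (restricting to the projections that actually occur). Non-rectangularity says exactly that some connected component of $H$ is not complete bipartite. By Step 1 it suffices to show hardness of $\#\CSP$ with the single constraint $R$, restricted to bipartite instances where block-$I$ variables are glued only to block-$J$ variables. Such instances count homomorphisms from a bipartite graph to $H$ that respect the sides.

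By the Dyer--Greenhill dichotomy \cite{DyerG00}, in its bipartite/two-sorted form, this problem is $\shP$-complete unless every component of $H$ is complete bipartite. In our situation it is therefore $\shP$-complete. The only remaining bookkeeping is that tuple-valued ``vertices'' in $A^I$ are simulated by blocks of $|I|$ variables over $A$; this is a parsimonious encoding. Composing the reductions yields $\shP$-completeness under Turing reductions.
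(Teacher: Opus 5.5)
The paper gives no proof of its own here; it simply imports Bulatov--Dalmau's theorem. Your outline is essentially their argument and is sound: closure under pp-definitions via interpolation, then Mal'tsev $\Leftrightarrow$ rectangularity of all pp-definable relations, then hardness of a non-rectangular relation via the bipartite case of Dyer--Greenhill.

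Two small touch-ups. First, index the interpolation unknowns by the product $\prod_i w_i$ rather than by the multiset of the $w_i$, since distinct multisets can have equal products and would repeat Vandermonde nodes. Second, in Step~2 the relevant relation is the projection of $F\subseteq A^{A^3}$ onto the coordinates $\{(x,x,y)\}\cup\{(y,x,x)\}$; a single split with $I=\{(x,x,y)\}$, applied to the three projection tuples, already yields both Mal'tsev identities at once, so no iteration is needed.
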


\subsection{Concluding the proof of Theorem \ref{thm:1}}
\label{ss:concluding}
Combining Lemma \ref{lem:CSP} with Theorem \ref{thm:maltsev}, we see that the following lemma suffices to establish the hardness statement of Theorem \ref{thm:1}.

\begin{lem}
	\label{lem:maltsev}
$T_G$ admits a Mal'tsev polymorphism if and only if $G$ is abelian.
\end{lem}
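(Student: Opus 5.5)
The plan is to handle the two directions separately. The abelian direction comes from an explicit formula. The nonabelian direction is a short forced computation that uses only the Mal'tsev identities and the polymorphism condition on $T_G$.

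For the ``if'' direction, suppose $G$ is abelian and set $f(x,y,z) \defeq xy^{-1}z$. This is Mal'tsev, since $f(x,x,y)=y$ and $f(y,x,x)=yx^{-1}x=y$. To see that it preserves $T_G$, take three triples $(g_i,h_i,g_ih_i)\in T_G$ for $i=1,2,3$ and apply $f$ coordinatewise. The result is $(g_1g_2^{-1}g_3,\; h_1h_2^{-1}h_3,\; g_1h_1(g_2h_2)^{-1}g_3h_3)$. By commutativity the third entry equals the product of the first two, so the image lies in $T_G$.

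For the ``only if'' direction, suppose $f$ is a Mal'tsev polymorphism of $T_G$. The key step is to feed $f$ triples that contain the identity $e$, so that the Mal'tsev identities pin down its values.
\begin{enumerate}
\item Apply $f$ to the triples $(g,e,g)$, $(e,e,e)$, $(e,h,h)$, all of which lie in $T_G$. The first coordinate is $f(g,e,e)=g$ and the second is $f(e,e,h)=h$. The polymorphism property therefore forces $f(g,e,h)=gh$ for all $g,h$.
\item Apply $f$ to the triples $(e,g,g)$, $(e,e,e)$, $(h,e,h)$. The first coordinate is $f(e,e,h)=h$ and the second is $f(g,e,e)=g$. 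This forces $f(g,e,h)=hg$.
\end{enumerate}
Comparing the two gives $gh=hg$ for all $g,h\in G$, so $G$ is abelian.

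The only real obstacle is spotting which triples to feed in. Once one notices that the middle triple $(e,e,e)$ reduces $f$ to $f(\,\cdot\,,e,\,\cdot\,)$ in the third coordinate, the rest is a direct check. I also need to apply the polymorphism condition coordinatewise in the standard sense, which is what the definition in \S\ref{ss:maltsev} intends despite the indexing in its display.
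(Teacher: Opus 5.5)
Your proof is correct and takes essentially the paper's route: feed $f$ triples of $T_G$ on which the Mal'tsev identities pin down the first two coordinates, which forces the third coordinate to equal two different products. The paper does this with a general middle argument $y$, getting $f(x,y,z)=zy^{-1}x=xy^{-1}z$; you take the middle argument to be $e$, which already gives $gh=hg$. You also explicitly check that $xy^{-1}z$ preserves $T_G$ when $G$ is abelian, which the paper leaves implicit.
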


\begin{proof}
	Suppose $f: G^3 \to G$ is a Mal'tsev polymorphism of $T_G$.
	Let $x,y,z$ be any group elements and  consider the following two pairs of 3-tuples of elements of $T_G$:
	\[
	\begin{array}{c|ccc}
		& g & h & gh \\
		\hline
		& x & 1 & x\\
		& x & x^{-1}y & y \\
		& zy^{-1}x & x^{-1}y & z \\
		\hline
		f: & zy^{-1}x & 1 & f(x,y,z)
	\end{array}
	\hspace{3cm}
	\begin{array}{c|ccc}
		& g & h & gh \\
		\hline
		& 1 & x & x\\
		& yx^{-1} & x & y \\
		& yx^{-1} & xy^{-1}z & z \\
		\hline
		f: & 1 & xy^{-1}z & f(x,y,z)
	\end{array}
	\]
	From the first, we see that if $f$ is a Mal'tsev polymorphism, then we need $f(x,y,z) = zy^{-1}x$.  The second shows that we need $f(x,y,z) = xy^{-1}z$.  In other words, for all $x,y,z \in G$, we need $xy^{-1}z = zy^{-1}x$, which happens if and only if $G$ is abelian. 
\end{proof}

Finally, the polynomial-time claim in the case of abelian $G$ is well-known (compare, e.g., \cite[Thm.~2.5]{me:zombies} or \cite[Thm.~7]{count}) and follows readily from the efficient computability of Smith normal form of integer matrices \cite{snf}.
Perhaps the only subtlety to note is that we should preprocess $G$ (independently of $\Gamma$) so that its prime decomposition is manifest.
Doing so ensures we do not need to worry about doing any prime factorization of the invariant factors of the abelianization of $\Gamma$---we only need to extract the primary factors that involve the primes from $|G|$. \qed

\subsection{Restricting the input group}
\label{ss:restrictedInput}
We now prove Corollary~\ref{cor:restrictedInput}.

\subsubsection*{Right-angled Artin groups}
A \emph{right-angled Artin group} (RAAG) is a finitely-presented group over a generating set $X$ for which any relations are of the form $[x,y] = 1$ for some $x,y \in X$.
A common way to specify a RAAG is using a simple graph $(V,E)$: the vertices in $V$ are taken as generators, and two such generators $v,w$ are taken to commute if and ony if $\{v,w\} \in E$.
A complete graph on $n$ vertices would yield $\ZZ^n$, while a totally disconnected graph on $n$ vertices would yield the free group $F_n$.

We again consider the finite group $G$ as an alphabet set, but now define another binary relation on $G$:
\[ R_G \defeq \{ (g, h) \mid g, h \in G, [g,h] = 1\}.\]

\begin{lem}
	\label{lem:maltsevCommutator}
	$R_G$ admits a Mal'tsev polymorphism if and only if $G$ is abelian.
\end{lem}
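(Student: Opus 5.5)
The plan is to treat the two directions separately. The backward direction is immediate, and the forward direction needs only one well-chosen triple of commuting pairs, in the same spirit as the proof of Lemma~\ref{lem:maltsev}.

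\emph{If $G$ is abelian.} In this case every pair of elements commutes, so $R_G = G \times G$ is the full binary relation. Every ternary operation on $G$ is then trivially a polymorphism of $R_G$. It therefore suffices to exhibit any Mal'tsev operation on $G$, and the operation $f(x,y,z) = xy^{-1}z$ works, since $f(x,x,y) = y = f(y,x,x)$. (For abelian $G$ this operation is also a polymorphism of $T_G$, which is consistent with Lemma~\ref{lem:maltsev}, though we do not need that here.)

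\emph{If $G$ is nonabelian.} Suppose $f \colon G^3 \to G$ is a Mal'tsev polymorphism of $R_G$, and choose $a,b \in G$ with $[a,b] \neq 1$. Consider the three pairs
\[
(a,1),\quad (1,1),\quad (1,b),
\]
each of which lies in $R_G$ because the identity commutes with everything. Applying $f$ coordinatewise gives the pair $\bigl(f(a,1,1),\, f(1,1,b)\bigr)$. The Mal'tsev identities $f(y,x,x)=y$ and $f(x,x,y)=y$ force this pair to equal $(a,b)$. Since $f$ is a polymorphism, $(a,b) \in R_G$, that is, $[a,b]=1$, which is a contradiction.

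I do not expect a real obstacle here. The only point needing care is arranging the tuples so that each coordinate column matches one of the two Mal'tsev identities. Placing the identity element in the middle row achieves this for both columns at once.
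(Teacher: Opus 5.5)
Your proposal is correct and is essentially the paper's proof. The paper uses the same three pairs $(x,1),(1,1),(1,y)$ and applies the two Mal'tsev identities columnwise to force $(x,y)\in R_G$. You also spell out the abelian direction ($R_G = G\times G$, with $f(x,y,z)=xy^{-1}z$), which the paper leaves implicit.
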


\begin{proof}
	Suppose $f: G^3 \to G$ is a Mal'tsev polymorphism of $R_G$.
	Let $x,y$ be any group elements and  consider the following elements of $R_G$:
	\[
	\begin{array}{c|cc}
		& g & h \\
		\hline
		& x & 1 \\
		& 1 & 1  \\
		& 1 & y \\
		\hline
		f: & x & y
	\end{array}
	\]
	Therefore, $x$ and $y$ must commute; hence, $G$ is abelian.
\end{proof}
It is now easy to check that exactly as in Lemma~\ref{lem:CSP}, there is a parsimonious reduction from $\#\CSP(R_G)$ to $\#\Hom( \mathrm{RAAGs} ,G)$, where $\mathrm{RAAGs}$ denotes the class of all finitely generated right-angled Artin groups.

\subsubsection*{Small cancellation groups.}
Let \(\Gamma = \langle S \mid R\rangle\) be a finitely presented group, where \(R\) is a symmetrized set of cyclically reduced relators, meaning that it is closed under inversion and cyclic permutations.
A \emph{piece} is a nontrivial word \(u\) that occurs as an initial segment of two distinct relators in \(R\).
Such a presentation satisfies the \emph{$C'(1/p)$ condition} if for every relator $r \in R$ and every piece \(u\) that is a subword of \(r\), one has
	\[
	|u| < \frac{1}{p}\,|r|.
	\]
It is well-known that for $p \geq 6$, the condition $C'(1/p)$ implies that Dehn's algorithm solves the word problem of the group \cite[Theorem V.4.4]{LS01} and, thus, it is word-hyperbolic \cite{Gromov}.

The proof that $C'(1/6)$ groups satisfy the hardness conclusion of Theorem \ref{thm:1} follows immediately by combining that theorem with the next lemma.

\begin{lem}\label{lem:smallCancellationGroup}
Fix a finite group $G$ and $p \in \NN$. Then there exists a polynomial time algorithm that, given a finitely presented group $\Gamma = \langle S \mid R \rangle$, constructs a new finitely presented group $\hat \Gamma$ with the following properties:
\begin{itemize}
	\item $\hat \Gamma$ satisfies the $C'(1/p)$ condition,
	\item $\hat \Gamma$ is torsion free,
	\item $\hat \Gamma$ admits a surjection onto $\Gamma$, and
	\item $\#\Hom(\hat{\Gamma},G) = \#\Hom(\Gamma,G) \cdot |G|^{2|R|}$,	where $|R|$ is the number of relators in $\Gamma$.
\end{itemize}
\end{lem}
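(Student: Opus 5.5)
The plan is to append to each relator a long word in fresh generators that is a \emph{law} of $G$, i.e.\ evaluates to $1$ under every assignment into $G$. This relator then imposes no extra constraint on homomorphisms to $G$, yet it dominates the relator and forces small cancellation.

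\textbf{Construction.} Let $N$ be the exponent of $G$, say $N=|G|$. Cyclically reduce every relator; this is polynomial time and changes nothing. For each relator $r\in R$, introduce two new generators $a_r,b_r$ and the word
\[ w_r \defeq a_r^{N} b_r^{N} a_r^{2N} b_r^{N} a_r^{3N} b_r^{N} \cdots a_r^{kN} b_r^{N}, \]
where $k$ is chosen polynomially large in $p$ and $\max_{r\in R}|r|$. Set
\[ \hat\Gamma \defeq \langle S\cup\{a_r,b_r\}_{r\in R} \mid r\,w_r \ (r\in R)\rangle, \]
and symmetrize the relator set. The size is polynomial because $|w_r|=O(k^2N)$ and $N$ is a constant.

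\textbf{The easy properties.} Every assignment into $G$ sends $a_r^{N}$ and $b_r^{N}$ to $1$, so $w_r\mapsto 1$. Hence a homomorphism $\hat\Gamma\to G$ is exactly a homomorphism $\Gamma\to G$ (the images of $S$) together with arbitrary images of the $2|R|$ new generators. This gives the count $\#\Hom(\Gamma,G)\cdot|G|^{2|R|}$. Killing all $a_r,b_r$ gives the surjection $\hat\Gamma\to\Gamma$.

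\textbf{Small cancellation (main obstacle).} We must bound the pieces of the symmetrized presentation. The letters $a_r,b_r$ occur in exactly one relator up to inversion and cyclic permutation. So a piece coming from two different relators $r\neq r'$ can only live inside the $S$-letters: it has length at most $\max|r|+1$ after absorbing a boundary letter.

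For two distinct cyclic conjugates (or inverses) of the same relator $r\,w_r$, a common subword that contains a full syllable $a_r^{iN}$ together with an adjacent $b_r$ determines its position, because the exponents $iN$ are pairwise distinct. Such a subword therefore cannot be a piece. Consequently every piece lies within some stretch of the form
\[ b_r^{N}\,a_r^{iN}\,b_r^{N}, \qquad \text{or within } r \text{ plus one flanking syllable}, \]
and so has length at most $kN+2N+|r|$.

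On the other hand, $|r\,w_r|\ge Nk^2/2$. Taking $k\ge 4p+2p\max|r|$ (roughly) gives $|u|<|r\,w_r|/p$ for every piece $u$. I expect this careful case analysis of overlaps — especially pieces straddling the junction between $r$ and $w_r$, and the cyclic wrap-around — to be the most delicate step. Choosing a sufficiently large $k$ should absorb all of the constants.

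\textbf{Torsion-freeness.} For $p\ge6$ we invoke the standard result (Lyndon--Schupp) that a $C'(1/6)$ group is torsion free provided no relator is a proper power. No $r\,w_r$ is a proper power, since the syllable $a_r^{kN}$ occurs exactly once in it. For $p<6$ we simply run the construction with $\max(p,6)$ in place of $p$, because $C'(1/6)$ implies $C'(1/p)$.
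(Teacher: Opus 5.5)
Your strategy is the paper's. You append to each relator a long positive word in two fresh generators. That word is built from blocks whose lengths are multiples of the exponent of $G$, so it is a law in $G$, and it is arranged so that long subwords occur at only one position. The homomorphism count, the surjection, and torsion-freeness (no relator is a proper power, plus $C'(1/6)$) then go exactly as in the paper.

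The only real difference is the position-determining word. The paper substitutes $a_r^{\eta\ell},b_r^{\eta\ell}$ into a de Bruijn-type word $w(x,y)$ all of whose length-$k$ factors are distinct, while you use $a_r$-syllables of strictly increasing length. That choice is fine, but your piece analysis for it contains a false step.

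A full syllable $a_r^{iN}$ together with \emph{one} adjacent $b_r$ does not determine the position. Indeed $a_r^{iN}b_r$ is a suffix of $a_r^{jN}b_r$, and $b_ra_r^{iN}$ a prefix of $b_ra_r^{jN}$, for every $j\ge i$. What is true is that a subword containing a complete $a_r$-syllable flanked by $b_r$ on \emph{both} sides has a unique position. So pieces need not lie in a stretch $b_r^Na_r^{iN}b_r^N$, and your bound $kN+2N+|r|$ fails. For instance, $a_r^{(k-2)N}b_r^Na_r^{(k-1)N}$ occurs both starting at the syllable $a_r^{(k-2)N}$ and starting $N$ letters into the syllable $a_r^{(k-1)N}$. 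It is therefore a piece of length $(2k-2)N$, which exceeds your bound once $k$ is large.

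The repair costs nothing. A piece inside $w_r$ meets at most one $b_r$-syllable, so it has length less than $2kN$. This is still linear in $k$, while $|r\,w_r|\ge Nk(k+3)/2$. Hence $k\ge 4p$ already controls these pieces, and your choice $k\ge 4p+2p\max|r|$ still yields $C'(1/p)$.

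The junction and wrap-around cases you flagged as delicate are in fact trivial. Conjugates of $(r\,w_r)^{-1}$ contain only $a_r^{-1},b_r^{-1}$. The cyclic word $r\,w_r$ contains exactly one transition from $S$-letters to $\{a_r,b_r\}$ and exactly one back. So no piece contains both old and new letters, which also makes your ``$+1$'' and ``$r$ plus one flanking syllable'' allowances unnecessary.
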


We note the similarities between this lemma and the main results of both \cite{me:HQFT} and \cite{Clem:HQFT}, the latter of which has a special case paraphrased below in Theorem \ref{thm:clem}, and is a key part of our proof of Theorem \ref{thm:2}.

\newcommand{\Gexp}{\eta}
\begin{proof}
	Let $\Gexp$ be the exponent of $G$.

	Let $w = w(x,y)$ be a word over some two-letter alphabet $\{x,y\}$ (where $x,y$ are new symbols having nothing to do with $\Gamma$ or $G$) with the following two properties:
	\begin{itemize}
	\item there exists a $k \in \mathbb{N}$ such that no two different factors (contiguous subwords) of length $k$ in $w$ agree~-- more formally, whenever we can write $w= t_1 u v_1 = t_2 u v_2$ for words $t_1,t_2, u, v_1,v_2$, we have already $t_1 = t_2$ and $v_1 = v_2$, and
	\item the length of $w$ is at least $p\cdot k$.
	\end{itemize}
	It is well-known that for any $p$ there exist such $k$ and $w$.
	For instance take a suitable prefix of a de Bruijn sequence $B(2,k)$ for large enough $k$; see e.g.\ \cite{BerstelP07}.

	Now, given an input presentation of a group \(\Gamma = \langle S \mid R\rangle\), denote $\ell = \max\{ |r| \mid r\in R\}$.
	 For each relation $r \in R$, we add two new generators $a_r, b_r$ and replace the relation by the new one $r w(a_r^{\Gexp\ell}, b_r^{\Gexp\ell})$ (where $ w(a_r^{\Gexp\ell}, b_r^{\Gexp\ell})$ means that we substitute  $a_r^{\Gexp\ell}$ and  $b_r^{\Gexp\ell}$ for $x$ and $y$). 
	We also add the inverse of this relation and all cyclic permutations.
	 In other words,
	 \[\hat \Gamma = \langle \hat S  \mid \hat R\rangle\]
 where
	\[\hat S = S \cup \{a_r, b_r \mid r \in R\} \qquad\text{ and } \qquad\hat R = \operatorname{sm}(\{ r w(a_r^{\Gexp\ell}, b_r^{\Gexp\ell}) \mid r\in R \}),\]
	and $\operatorname{sm}()$ means that we symmetrize the relations.
	Clearly, the presentation can be computed in polynomial time. 
	 Now, any piece of $\hat R$ is either part of some of the original relations or it comes from a factor of length less than $k$ of $w$. Therefore, every piece has length at most $\Gexp\cdot \ell\cdot k$. As each of the relations in $\hat R$ has length at least $\Gexp\cdot \ell\cdot k \cdot p$ (by the choice of $w$), the condition $C'(1/p)$ follows.
	
	Note that, by construction, no relator in $\hat \Gamma$ is a proper power.
	Thus if we assume $p\ge 6$, which we may as well, we see that $\hat \Gamma$ is torsion-free.
	(Alternatively: the fact that $\hat \Gamma$ is an amalgamated product of several free groups implies it is torsion-free.)
	
	The surjection $\hat\Gamma \to \Gamma$ is straightforward by sending $\langle \{a_r, b_r \mid r \in R\} \rangle $ to $1$. 
	Finally, observe that any homomorphism $\hat{\Gamma} \to G$ must map each relator $r \in R$ to $1$ as the image of $w(a_r^{\Gexp\ell}, b_r^{\Gexp\ell})$ is clearly $1$ in $G$.
	On the other hand, the generators $\{a_r, b_r \mid r \in R\}$ can be mapped freely to $G$ with no restriction.
	Therefore, $\#\Hom(\hat{\Gamma},G) = \#\Hom(\Gamma,G) \cdot |G|^{2|R|}$.
\end{proof}

Note that we can modify the proof of Lemma~\ref{lem:smallCancellationGroup} to obtain a quotient $\tilde \Gamma$ of the group $\hat\Gamma$ such that  $\#\Hom(\tilde{\Gamma},G) = \#\Hom(\Gamma,G)$ with the price that $\tilde\Gamma$ is no longer torsion-free:
let $q> \Gexp \cdot k \cdot p \cdot \ell$ be any number coprime to the exponent $\Gexp$ of $G$ and add the relations $a_r^q = b_r^q = 1$ for all $r \in R$.
Then, clearly, any homomorphisms $\hat\Gamma \to G$ must map $a_r$ and $b_r$ to $1$ for every $r \in R$.
As each piece still has length at most $  \Gexp \cdot k \cdot \ell$ the presentation still satisfies the condition $C'(1/p)$.

Note that using Lemma~\ref{lem:smallCancellationGroup} we conclude the following corollary of \cite{ElderShenWeiss}: Let $D_n$ be any dihedral group where $n$ is not a power of two. Then the question, given a \emph{hyperbolic} group $\Gamma$ (as a finite presentation), whether there is a surjection $\Gamma \to D_{n}$ is $\NP$-complete.

\subsubsection*{Nilpotent targets allow finite domains.}

Let $F(x_1, \dots, x_n)$ denote the free group of rank $n$. 
\emph{Basic commutators} are defined inductively as follows:
each generator $x_i$ is a basic commutator of weight $1$; if $\alpha$ and $\beta$ are basic commutators of weight $k$ resp.\ $\ell$ meeting certain (syntactical) conditions\footnote{For us, these conditions are not relevant; for details we refer to \cite[Chapter 11]{Hall59}.}, then $[\alpha, \beta]$ is a basic commutator of weight $k+\ell$.
The \emph{free nilpotent group} $F(n,c)$ of rank $n$ and nilpotency class $c$ is the free group modulo the relations that all weight $c+1$ commutators (not just the basic ones) are trivial.
We will heavily rely on the following result:

\begin{thm}[see {{\cite[Theorem 11.2.4]{Hall59}}}]\label{thm:hall}
	There is a sequence $\alpha_1, \dots, \alpha_m$ of basic commutators of weight at most $c$ such that every element $g \in F(n,c)$ has a unique representation $g = \alpha_1^{e_1} \cdots \alpha_m^{e_m}$ with $e_1, \dots, e_m \in \mathbb{Z}$.
\end{thm}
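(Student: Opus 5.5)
This is P. Hall's basis theorem for free nilpotent groups, so the plan is the classical collection-process argument. Set $\gamma_1 = F(n,c)$ and $\gamma_{k+1} = [\gamma_k, F(n,c)]$, the lower central series, so that $\gamma_{c+1}=1$. Fix the ordering $\alpha_1 < \alpha_2 < \cdots < \alpha_m$ of all basic commutators of weight at most $c$, ordered so that weight is nondecreasing. Existence and uniqueness of the normal form are then proved by working down the lower central series, one weight at a time.

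\emph{Existence.} This is the collection process. Start from an arbitrary word in the $x_i^{\pm1}$. Repeatedly locate the smallest basic commutator $\alpha$ that occurs out of order, and move each occurrence leftward using the identity $yx = xy[y,x]$ (and its variants for inverses). Each such move creates new commutators $[y,x]$. Hall's syntactic conditions are designed so that, after re-expressing, these new commutators are either basic commutators of strictly larger weight or can be rewritten as products of such. Commutators of weight $>c$ are trivial in $F(n,c)$ and are deleted. An induction on weight (with a secondary induction on the number of uncollected factors) shows the process terminates. It leaves a product $\alpha_1^{e_1}\cdots\alpha_m^{e_m}$. Termination uses that only finitely many basic commutators of weight $\le c$ exist and that weight strictly increases on newly created factors.

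\emph{Uniqueness.} This is the main obstacle, and I would get it from the structure of the lower central factors. The key step is to show that for each $k\le c$ the quotient $\gamma_k/\gamma_{k+1}$ is free abelian, with basis the images of the basic commutators of weight exactly $k$. Spanning follows from the collection process. For linear independence I would pass to the free Lie ring: the associated graded Lie ring $\bigoplus_k \gamma_k/\gamma_{k+1}$ of the free group is the free Lie ring on $x_1,\dots,x_n$ (Magnus–Witt, proved via the Magnus embedding into the ring of noncommuting power series). Hall's basic commutators correspond to a Hall basis of the free Lie ring, which is a $\mathbb{Z}$-basis. Witt's formula gives the count in each degree, namely the rank of that free abelian group.

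Given this, uniqueness follows. Suppose $\alpha_1^{e_1}\cdots\alpha_m^{e_m} = \alpha_1^{f_1}\cdots\alpha_m^{f_m}$. Project to $\gamma_1/\gamma_2 \cong \mathbb{Z}^n$; this forces the weight-one exponents to agree. Cancel those factors and project the remaining equality to $\gamma_2/\gamma_3$, which forces the weight-two exponents to agree, and so on up to weight $c$. In the write-up I would mostly cite the Lie-ring freeness (the content of \cite[Ch.~11]{Hall59}), since it is only used as a black box.
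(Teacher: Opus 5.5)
Your proposal is correct and follows the classical proof of Hall's basis theorem from Hall's book, which the paper only cites (Theorem 11.2.4 there) without giving an argument of its own. That proof has the same two parts as yours: existence by the collecting process, and uniqueness because the weight-$k$ basic commutators form a $\mathbb{Z}$-basis of $\gamma_k/\gamma_{k+1}$ (via the Magnus embedding and the free Lie ring), with exponents peeled off one weight at a time using the weight-nondecreasing order. In a write-up, state explicitly the step you use implicitly: $\gamma_k(F(n,c))/\gamma_{k+1}(F(n,c))\cong\gamma_k(F)/\gamma_{k+1}(F)$ for $k\le c$, which lets the free-group result apply to $F(n,c)$.
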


Using this theorem, it follows that $F(n,c)$ also can be presented using only relations stating that all \emph{basic} commutators of weight from $c+1$ up to $2c$ are trivial (since any basic commutator of greater weight must be composed of at least one of the commutators with weight in this range)\footnote{Note that this presentation contains some redundant relations; however, it is not clear to us how to find a minimal subset of these relations to obtain a presentation of the free nilpotent group. For a further discussion on this topic, we refer to a useful MathOverflow post \cite{mo}: \url{https://mathoverflow.net/q/330277/}.}. 
This yields a presentation of $F(n,c)$ that is of polynomial size in the rank $n$ (though the size depends exponentially on $c$).

Let $G$ be a finite nilpotent group of exponent $\Gexp$ and nilpotency class $c > 1$. 
Again we modify the construction of the input group $\Gamma$ from above and denote by $n$ the number of generators of $\Gamma$.
First, we modify the free nilpotent group $F(n,c)$:
add additional relations of the form $x^\Gexp=1$ for all basic commutators of weight at most $c$; this yields a quotient $Q$ of $F(n,c)$, which is seen to be finite again using Theorem~\ref{thm:hall} (in the quotient, we have only finitely many choices for each of the exponents).
Moreover, by construction, the presentation of $Q$ at hand has polynomial size (note that, in general, the exponent of $Q$ will be greater than $\Gexp$, but this is no problem for us).
Clearly any homomorphism $\Gamma = \langle X \mid R\rangle \to G$ must factor through $Q/\langle\!\langle R\rangle\!\rangle$, which is finite. 
Thus, replacing $\Gamma$ by  $Q/\langle\!\langle R\rangle\!\rangle$ proves the last point of Corollary~\ref{cor:restrictedInput} for finite presentations.

Finally, in order to obtain a permutation representation, we can proceed as follows.
As we already know we might as well do, we will start with $\Gamma = \langle X \mid R \rangle$ where $R \subseteq \{[x,y] \mid x,y \in X\}$, \emph{i.e.}~ we assume $\Gamma$ is a presentation of a RAAG.
Apply the construction from above to obtain the finite group $F(X,c)/N$, where $N$ is the normal subgroup generated by $R$ and all $\Gexp$-th powers of basic commutators; as above, every homomorphism $\Gamma \to G$ must factor through $F(X,c)/N$.
Therefore, let us construct a permutation representation of $F(X,c)/N$.

Observe that there is an embedding $ F(X,c) \to \Pi_{S \in \binom{X}{c}} F(S,c)$ where $\binom{X}{c}$ denotes the set of all $c$-element subsets of the generators $X$.
The embedding is induced by the canonical projections $\pi_S: F(X,c) \to F(S,c)$.
By Theorem~\ref{thm:hall}, this map is injective as every basic weight $\leq c$ commutator will be non-trivial in at least one of the $F(X,c)$s.
 
Now, for each $S \in \binom{X}{c}$ let $N_S$ be the normal subgroup of $F(S,c)$ generated by the $\Gexp$-th powers of all basic commutators together with $[x,y]$ whenever $x,y \in S$ and $[x,y] = 1 \in R$.
Note that $N_S = \pi_S(N)$.

 The homomorphism $F(X,c) \to \Pi_{S \in \binom{X}{c}} F(S,c)$ also induces a homomorphism 
 \[\varphi\colon F(X,c)/N \to  \Pi_{S \in \binom{X}{c}}F(S,c)/N_S\]
  and we claim that also $\varphi$ is injective.
 Indeed, if $g \in F(X,c)$ with $g \not\in N$, then $g$ can be written as a product of powers of basic commutators where at least one of these products of powers---let us call it $\alpha^e$---is not contained in $N$. 
 By our constructions, there exist some $S \in \binom{X}{c}$ that contains all the generators appearing in $\alpha$.
 But then $\pi_S(\alpha^e)$ is not contained in $N_S$, and the same is also true for $\pi_S(g)$. 
 Therefore, $\varphi(g) \neq 1$.
 
 Finally, since $c$ is a constant, there are only constantly many different (isomorphism types of) groups $F(S,c)/N_S$.
 Therefore, for each of them we can fix a permutation representation and, thus, can compute a permutation representation of $\Pi_{S \in \binom{X}{c}} F(S,c)/N_S$ in polynomial time. 
 We obtain the desired permutation representation of $\varphi(F(X,c)/N)$ by  taking the subgroup generated by $\{\varphi(x) \mid x \in X\}$.

\section{The algorithm for class 2 targets}
\label{sec:thm2}

Fix a class 2 finite nilpotent group $G = A \times_\eta B$, where $A$ and $B$ are finite abelian groups and $\eta \in Z^2(B,A)$ is a normalized 2-cocycle.
This means that, as a set, $G = A \times B$, but the group operation is given by
\[ (a_1,b_1) \cdot (a_2,b_2) = (a_1+a_2+\eta(b_1,b_2), b_1+b_2).\]
See e.g. \cite[Ch.~IV]{Brown:book} or \cite[Ch.~17]{DummitFoote} for background.

We now prove Theorem \ref{thm:2} by establishing the existence of a polynomial time algorithm that takes a triangulation $M$ of an orientable, closed 3-manifold to the number \[\#\Hom(\pi_1(M),G).\]
Our starting point is the formula already explained in Subsection \ref{ss:strategy}:
\[ \#\Hom(\Gamma,G) = \#\Hom(\Gamma,A) \times\#\{\psi \in \Hom(\Gamma,B) \mid [\psi_* \eta] = 0 \in H^2(\Gamma, B) \}\] 
for any group $\Gamma$.

In \S\ref{ss:quad}, Lemma \ref{lem:quad} justifies the claim that $\{\psi \in \Hom(\Gamma,B) \mid [\psi_* \eta] = 0 \in H^2(\Gamma,B)\}$ can be considered the zero set of a quadratic function
\[ (-)^*\eta: \Hom(\Gamma, B) \to H^2(\Gamma,A),\]
and Lemma \ref{lem:gauss} implies that so long as $H^2(\Gamma,A)$ is bounded, we can count these zeroes efficiently.
In \S\ref{ss:alg}, we describe our overall algorithm, and in \S\ref{ss:anal}, we give further details and prove that it runs in polynomial time, thereby establishing Theorem \ref{thm:2}.

\subsection{Quadratic functions on abelian groups}
\label{ss:quad}
Let $H$ and $K$ be two abelian groups (written additively).
A function $Q: H \to K$ is called \emph{quadratic} if
\[\begin{aligned}
	\delta Q: H \times H &\to K \\
	(h_1,h_2) &\mapsto Q(h_1+h_2)-Q(h_1)-Q(h_2)
\end{aligned}\]
is bilinear.

\begin{lem}
	\label{lem:quad}
	Let $A$ and $B$ be abelian groups, and let $\eta \in Z^2(B,A)$ be a normalized 2-cocycle.
	If $\Gamma$ is any group, then the map
	\[\begin{aligned}
		(-)^*\eta: \Hom(\Gamma, B) &\to H^2(\Gamma,A) \\
		\psi &\mapsto \psi^*[\eta] = [\psi^*\eta]
	\end{aligned}\]
is quadratic.
\end{lem}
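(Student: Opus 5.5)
We must show that $\delta Q(\psi_1,\psi_2) = [(\psi_1+\psi_2)^*\eta] - [\psi_1^*\eta] - [\psi_2^*\eta]$ is bilinear in $(\psi_1,\psi_2)$, where $Q(\psi)=[\psi^*\eta]$. The plan is to write this difference, at the level of cocycles on $\Gamma$, as the pullback of a fixed class on $B\times B$, and then to use the fact that the class is represented by a bilinear cocycle on $B\times B$.

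\textbf{Step 1: reduce to $B\times B$.} Let $\mu,p_1,p_2\colon B\times B\to B$ be addition and the two projections. For $\psi_1,\psi_2\in\Hom(\Gamma,B)$, put $\Psi=(\psi_1,\psi_2)\colon\Gamma\to B\times B$; this is a homomorphism because $B$ is abelian. Then $\psi_1+\psi_2=\mu\circ\Psi$ and $\psi_i=p_i\circ\Psi$. By naturality of pullback,
\[ \delta Q(\psi_1,\psi_2)=\Psi^*[\omega], \qquad \omega \defeq \mu^*\eta-p_1^*\eta-p_2^*\eta\in Z^2(B\times B,A). \]
Explicitly, $\omega((b_1,b_2),(b_1',b_2'))=\eta(b_1+b_2,b_1'+b_2')-\eta(b_1,b_1')-\eta(b_2,b_2')$.

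\textbf{Step 2: identify $[\omega]$.} This is the main step. I would show that $\omega$ is cohomologous to the bilinear cocycle
\[ \beta((b_1,b_2),(b_1',b_2'))=c(b_2,b_1'), \qquad c(x,y)\defeq\eta(x,y)-\eta(y,x). \]
For a central extension of an abelian group, $c$ is the commutator pairing and is bilinear; this is a standard consequence of the cocycle identity. Since $\beta$ is bilinear, it is automatically a cocycle.

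To prove $[\omega]=[\beta]$, the cleanest route is through the extension $E$ classified by $\omega$. Its elements can be written as pairs $(\tilde b_1,\tilde b_2)$ of lifts in $G$, multiplied coordinatewise and then taken modulo the diagonal copy of $A$. Define the section $s(b_1,b_2)=\tilde b_1\tilde b_2$, using the normalized lifts $\tilde b=(0,b)\in G$. Reordering $\tilde b_2\tilde b_1'$ costs exactly the commutator $c(b_2,b_1')$, and this yields the coboundary relation $\omega-\beta=\partial f$ with
\[ f(b_1,b_2)=\eta(b_1,b_2). \]
Checking this identity is a finite manipulation of the cocycle identity for $\eta$. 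If it proves awkward, I would fall back to a direct verification: expand $\partial f$ and apply the cocycle identity at most three times.

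\textbf{Step 3: conclude bilinearity.} Pulling back along $\Psi$ gives
\[ \delta Q(\psi_1,\psi_2)=[\,c(\psi_2(\gamma),\psi_1(\gamma'))\,], \]
the class of the cocycle $(\gamma,\gamma')\mapsto c(\psi_2(\gamma),\psi_1(\gamma'))$. This cocycle is bilinear in $(\psi_1,\psi_2)$ at the cochain level, because $c$ is bilinear and the values $\psi_i(\gamma)$ add pointwise. Taking classes is a homomorphism $Z^2(\Gamma,A)\to H^2(\Gamma,A)$, so $\delta Q$ is bilinear.

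The expected obstacle is the sign and ordering bookkeeping in Step 2. The underlying ideas (naturality, and bilinearity of the commutator pairing) are routine.
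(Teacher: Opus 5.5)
Your argument is correct, but it takes a different route from the paper's.

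\textbf{The paper's route.} The paper stays at the level of cohomology classes. It invokes the K\"unneth splitting $H^2(B\times B,A)=p_1^*H^2(B,A)\oplus p_2^*H^2(B,A)\oplus\Hom(B\otimes B,A)$ and writes $P^*[\eta]=p_1^*[\eta]+p_2^*[\eta]+\beta$, with $\beta$ in the last summand determined only abstractly. It then pulls back along $\psi_1\times\psi_2$ and asserts without further comment that $(\psi_1\times\psi_2)^*\beta$ is bilinear.

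\textbf{Your route.} You share the first reduction (naturality along $\Psi=(\psi_1,\psi_2)$). In place of K\"unneth, you identify the cross term explicitly at the cochain level: $\omega=\mu^*\eta-p_1^*\eta-p_2^*\eta$ is cohomologous to the bilinear cocycle $c(b_2,b_1')$, where $c$ is the commutator pairing of $G$. This buys you three things.
\begin{enumerate}
\item It avoids K\"unneth and universal-coefficient bookkeeping with arbitrary coefficients $A$.
\item It names the bilinear form: $\delta Q(\psi_1,\psi_2)$ is the class of $(\gamma,\gamma')\mapsto c(\psi_2(\gamma),\psi_1(\gamma'))$, i.e.\ the cup product of $\psi_2$ and $\psi_1$ pushed forward along $c$.
\item It supplies the point the paper states without elaboration. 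Bilinearity of $(\psi_1\times\psi_2)^*\beta$ in $(\psi_1,\psi_2)$ only becomes evident once one knows $\beta$ is represented by a cocycle of the form $\lambda(b_2,b_1')$ with $\lambda$ bilinear.
\end{enumerate}
The paper's version is shorter and more conceptual, but it rests on that unstated identification.

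\textbf{Two harmless slips in Step 2.}
\begin{enumerate}
\item Take the usual convention $(\partial f)(x,y)=f(x)+f(y)-f(x+y)$. Expanding $(\tilde x_1\tilde x_2)(\tilde y_1\tilde y_2)$ in $G$ in two ways gives $\omega+\partial f=\beta$. So the coboundary is $\partial(-f)$ rather than $\partial f$, which is immaterial for $[\omega]=[\beta]$.
\item The group of pairs of lifts modulo a copy of $A$ is the Baer sum classifying $p_1^*\eta+p_2^*\eta$, not $\omega$. Your section $s(b_1,b_2)=\tilde b_1\tilde b_2$ actually lives in the pullback of $G$ along $\mu$.
\end{enumerate}
The cleanest write-up of this step is simply the direct two-way expansion of that four-fold product in $G$, which is your stated fallback.
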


\begin{proof}
	To prove this, we will first consider how $[\eta] \in H^2(B,A)$ interacts with the addition on $B$ and use the K\"unneth isomorphism.

	Let $P: B \times B \to B$ be the addition map on $B$ ($P$ is for ``plus"), let $p_1: B \times B \to B$ be projection onto the left factor and let $p_2: B \times B \to B$ be projection on the the right factor.
	Since $B$ is abelian, $P$ is a group homomorphism.
	The K\"unneth theorem gives a splitting
	\[H^2(B \times B, A) = p_1^*(H^2(B,A)) \oplus p_2^*(H^2(B,A)) \oplus \Hom(B\otimes B, A)\]
	in which
	\[ P^*[\eta] = p_1^*[\eta] \oplus p_2^*[\eta] \oplus \beta\]
	for some unique $\beta \in \Hom(B \otimes B, A)$, \emph{i.e.}~ for some unique bilinear map $\beta: B \times B \to A$.

	Now let $\psi_1,\psi_2 \in \Hom(\Gamma, B)$.
	We identify $\psi_1 + \psi_2: \Gamma \to B$ with $P \circ (\psi_1 \times \psi_2)$ where
	\[\begin{aligned}
		\psi_1 \times \psi_2: \Gamma &\to B \times B \\
		x &\mapsto (\psi_1(x),\psi_2(x)).
	\end{aligned}\]
	Thus
	\[\begin{aligned}
		Q(\psi_1+\psi_2) &= (\psi_1+\psi_2)^*[\eta] \\
				 &= (\psi_1 \times \psi_2)^* P^* [\eta] \\
				 &= (\psi_1 \times \psi_2)^* (p_1^*[\eta] \oplus p_2^*[\eta] \oplus \beta) \\
				 &= \psi_1^*[\eta] + \psi_2^*[\eta] + (\psi_1 \times \psi_2)^* \beta \\
				 &= Q(\psi_1) + Q(\psi_2) + (\psi_1 \times \psi_2)^* \beta,
	\end{aligned}\]
	where $(\psi_1 \times \psi_2)^* \beta$ is bilinear, as needed.
\end{proof}

\begin{lem}
	\label{lem:gauss}
	Fix a positive constant $C$, and let $H$ and $K$ be finite abelian groups (encoded via primary decompositions) with $|K|<C$.
	If $Q: H \to K$ is a quadratic function (provided via oracle access), then there is a polynomial time algorithm to compute the number of zeroes of $Q$, \emph{i.e.}~
	\[ \#\{ h \in H \mid Q(h) = 0 \in K \}.\]
\end{lem}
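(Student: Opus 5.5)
Since $|K|<C$ is bounded, I will count zeroes by Fourier analysis on $K$:
\[ \#\{h \in H \mid Q(h)=0\} = \frac{1}{|K|}\sum_{\chi \in \widehat{K}} \sum_{h \in H} \chi(Q(h)). \]
There are fewer than $C$ characters $\chi$, so it suffices to evaluate each exponential sum $S_\chi \defeq \sum_{h\in H}\chi(Q(h))$ exactly in polynomial time (in $\log|H|$, i.e.\ the size of the primary decomposition of $H$). Each $q_\chi \defeq \chi\circ Q: H \to \mathbb{Q}/\mathbb{Z}$ (values of order dividing $\exp(K)$) is again quadratic, with bilinear form $\chi\circ\delta Q$. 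Moreover, only the $p$-parts of $H$ for primes $p \mid \exp(K)$ matter modulo a linear piece. Indeed, since $Q(nh)$ is a quadratic polynomial in $n$ with values in $K$, restricting to the prime-to-$\exp(K)$ part gives a contribution I can compute separately.

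First I split $H = \bigoplus_p H_p$ and compute everything by oracle queries on generators. Writing $H = \bigoplus_i \mathbb{Z}/n_i$ with generators $e_i$, the identity $Q(\sum x_i e_i) = \sum_i Q(x_ie_i) + \sum_{i<j} x_ix_j\,\delta Q(e_i,e_j)$ and $Q(xe_i) = x\,Q(e_i) + \binom{x}{2}\delta Q(e_i,e_i)$ express $Q$ completely through the polynomially many oracle values $Q(e_i)$ and $\delta Q(e_i,e_j)$. The components with $\gcd(n_i,\exp K)=1$ contribute trivially in the sense that $\delta Q$ pairs them to zero with everything (a bilinear map into $K$ kills elements of order prime to $|K|$), and $Q$ is a homomorphism there that is itself zero. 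So these components just contribute a factor $n_i$ to the count. The remaining cyclic orders $n_i$ can be replaced by their $\exp(K)$-relevant parts, keeping track of multiplicities; since $q_\chi$ then factors through $H/mH$ for a bounded $m$ (some divisor of $2\exp(K)^2$), we may reduce to a finite abelian group $\overline H$ of bounded exponent but unbounded rank, multiplying by $|mH|$.

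The remaining task is a Gauss sum over $\overline H \cong \bigoplus \mathbb{Z}/m_i$ with $m_i$ bounded. Here I will run a symplectic or Gaussian-elimination style reduction, as in the classical algorithm for counting solutions of quadratic forms over $\mathbb{Z}/2$ or $\mathbb{Z}/p^k$. I find a generator pair on which the bilinear form $b=\chi\circ\delta Q$ is ``nondegenerate of maximal order,'' split off that hyperbolic or diagonal block via a change of basis computed by row operations, evaluate its bounded-size Gauss sum by brute force, and recurse on the orthogonal complement. Once $b$ vanishes on the remainder, $q_\chi$ is linear there, and its sum is $|\,\cdot\,|$ or $0$ depending on whether the character is trivial. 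Each step uses integer linear algebra over $\mathbb{Z}/m$ with $m$ bounded, so it runs in polynomial time, and the Gauss sum values are algebraic numbers in a fixed cyclotomic field, which can be multiplied exactly.

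I expect the main obstacle to be this orthogonal splitting over $\mathbb{Z}/p^k$ rather than over a field, especially at $p=2$, where $q$ is not determined by $b$ and diagonal $2\times 2$ blocks appear. My first attempt will be to always choose a pair $(u,v)$ or a single $u$ with $b$-value of minimal $p$-adic valuation. I would then use it to clear the corresponding coordinates of all other basis elements, mirroring Smith normal form, and fall back on brute force over bounded-size blocks whenever the structure is ambiguous.
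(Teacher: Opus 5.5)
Your first display is exactly the paper's proof. Orthogonality of characters turns the zero count into fewer than $C$ Gauss sums of the quadratic functions $\chi\circ Q$. The paper then simply invokes the folklore theorem that such Gauss sums are computable in polynomial time from a primary decomposition of $H$ and oracle access to $q$, pointing to \cite[Thm.~2.2]{me:TY}. Citing that finishes your proof after one paragraph.

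Everything after that paragraph tries to reprove the folklore result. Your preliminary reductions are correct: the prime-to-$|K|$ part of $H$ only contributes a multiplicative factor, and $Q$ factors through $H/2\exp(K)H$. The core splitting step, however, has a genuine gap, which you yourself flag. Choosing $u$ with $b(u,u)$ of minimal valuation and recursing on $u^{\perp}$ is invalid when $b=\chi\circ\delta Q$ is degenerate.

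For example, take $p$ odd, $K=\mathbb{Z}/p^2$, $H=\mathbb{Z}/p^2\oplus\mathbb{Z}/p$ and $Q(x)=px_1x_2$. This survives your reduction to $H/mH$, and for $\chi$ of order $p^2$ it gives $b(x,y)=(x_1y_2+x_2y_1)/p$. Every $u$ with $b(u,u)\neq 0$ has $u_1\not\equiv 0\pmod p$, hence order $p^2$, while $b(u,\cdot)$ only takes values of order $p$. So $0\neq pu\in\langle u\rangle\cap u^{\perp}$, the sum $\langle u\rangle+u^{\perp}$ is not direct, and recursing overcounts. In fact this $H$ has no orthogonal decomposition into cyclic summands at all. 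Brute force rescues this small case, but in unbounded rank ``fall back on brute force whenever the structure is ambiguous'' is not an algorithm.

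The missing idea is to discard the radical first. $R=\mathrm{rad}(b)$ is computable by Smith normal form, and $q(h+r)=q(h)+q(r)$ for $r\in R$, so $q|_R$ is a homomorphism. If $q$ is nonzero on some generator of $R$, the Gauss sum vanishes. Otherwise $q$ descends to $H/R$, and the sum equals $|R|$ times the Gauss sum of $\bar q$ on $H/R$.

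On $H/R$ the form is nondegenerate, so if $p^k$ is the exponent of the $p$-part, some basis vectors $x,y$ have $b(x,y)$ of order $p^k$.
\begin{itemize}
\item For odd $p$, one of $x,y,x+y$ is a $u$ with $b(u,u)$ of order $p^k$. Such $u$ has order exactly $p^k$, so $\langle u\rangle\oplus u^{\perp}$ is a genuine orthogonal splitting with $b$ nondegenerate on $u^{\perp}$.
\item For $p=2$, if no basis vector has $b(u,u)$ of order $2^k$, the pair $x,y$ spans an orthogonal summand $(\mathbb{Z}/2^k)^2$. Its Gram matrix, scaled by $2^k$, has odd determinant.
\end{itemize}
Since $b$ vanishes between orthogonal summands, $q$ is additive across them even though it need not be homogeneous. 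The Gauss sum is therefore a product of brute-force sums over blocks of rank at most $2$ and bounded exponent, so your worry that $q$ is not determined by $b$ at $p=2$ never arises. With this repair, or by citing the folklore result as the paper does, the argument is complete.
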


\begin{proof}
	It is useful to introduce a few additional definitions.
	
	First, the \emph{Pontryagin dual} of $K$ is the group of characters:
	\[ \hat K \defeq \Hom(K, U(1)).\]
	Note that $\hat K \cong K$---in particular, $|\hat K| = |K|$.

	Second, if $q: H \to U(1)$ is a quadratic function, then its associated \emph{Gauss sum} is
	\[ G(q) \defeq \sum_{h \in H} q(h).\]
	It is a folklore result that for finite abelian groups $H$ provided via a primary decomposition and $q$ provided via an oracle, there is an efficient algorithm to compute $G(q)$; see \cite[Thm.~2.2]{me:TY} for a proof.

	We need one final fact, which follows from the orthogonality of characters:
	\[ \sum_{\chi \in \hat K} \chi(k) =
		\begin{cases}
			|K| & \text{ if } k=1,\\
			0 & \text{ else.}
		\end{cases}
	\]
	Therefore,
	\[\#\{h \in H \mid Q(h) = 0 \} = \frac{1}{|K|} \sum_{\chi \in \hat K} \sum_{h \in H} \chi(Q(h)).\]

	And now we have our algorithm: for each fixed $\chi$, the inner sum $\sum_{h \in H} \chi(Q(h))$ is an efficiently computable Gauss sum, since $\chi\circ Q: H \to U(1)$ is quadratic.
	Since $|K|<C$, we only have to compute a constant number of them, sum the results, and divide by $|K|$. 
\end{proof}

\subsection{The overall algorithm}
\label{ss:alg}
Here is the overall algorithm we use to establish Theorem \ref{thm:2} (the details of each step are described in the next subsection).

\begin{enumerate}
	\item Given $M$ build a new triangulated 3-manifold $N$ with three properties:
		\begin{enumerate}
			\item $\#\Hom(\pi_1(N),G)=\#\Hom(\pi_1(M),G)$,
			\item $\#H^2(N,A) = H^2(M,A)$, and
			\item $N$ is an Eilenberg-MacLane space.
		\end{enumerate}
	$\#\#$: Note that $N$ need not be homeomorphic with $M$.
\item Compute $a \defeq \#\Hom(\pi_1(N),A)$.
\item Compute $b \defeq \#\{ \psi \in \Hom(\pi_1(N),B) \mid [\psi^* \eta] = 0 \in H^2(\pi_1(N),B) \}$.
\item Return $\#\Hom(\pi_1(M),G) = a\times b$.
\end{enumerate}

\subsection{Algorithm details and running time analysis}
\label{ss:anal}
Step (1) can be achieved in polynomial time using techniques of Ennes and Maria.

\begin{thm}[After \cite{Clem:HQFT}]
	\label{thm:clem}
	If $G$ is any fixed finite group with center $A=Z(G)$, then there exists a polynomial time algorithm that converts an input triangulation $M$ of an orientable, closed 3-manifold into a new triangulation $N$ of a (new) orientable, closed 3-manifold such that
	\[\begin{aligned}
		\#\Hom(\pi_1(N),G) &= \#\Hom(\pi_1(M),G) \\
		\#H^2(N, A) &= \#H^2(M,A),
	\end{aligned}\]
	and $N$ is a hyperbolic 3-manifold.
\end{thm}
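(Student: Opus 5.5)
We will follow the strategy of \cite{me:HQFT} as refined by Ennes and Maria \cite{Clem:HQFT}. The idea is to replace $M$ by a connected sum with a fixed, carefully chosen closed hyperbolic 3-manifold $P$, or, more robustly, to drill a suitable hyperbolic knot and refill. The choice of $P$ (or of the modification) is made so that it is ``invisible'' both to $G$ and to $A$-coefficient cohomology. Concretely, we first fix, independently of the input, a closed orientable 3-manifold $P$ with two properties. First, $\pi_1(P)$ is perfect and admits no nontrivial homomorphism to $G$; one option is a hyperbolic integer homology sphere whose fundamental group has no nontrivial finite quotients of order at most $|G|$, obtained by Dehn surgery on a suitable knot. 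Second, $H^*(P;A)\cong H^*(S^3;A)$. Since $G$ is fixed, finding such a $P$ is a constant-time preprocessing step. We also fix a constant-size triangulation of it.

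Next we want a polynomial-time operation $M\mapsto N$ producing a hyperbolic manifold while preserving both $\#\Hom(\pi_1(-),G)$ and $\#H^2(-,A)$. A bare connected sum $M\#P$ is not hyperbolic, since it contains an essential sphere, so instead we will use the Ennes--Maria construction. Starting from the triangulation of $M$, we efficiently find a link $L\subset M$ in the $1$-skeleton whose complement is hyperbolic. For each component of $L$, we glue in a fixed ``filler'' piece $W$ with torus boundary in place of a tubular neighborhood. The piece $W$ is chosen so that three things hold:
\begin{itemize}
\item every homomorphism $\pi_1(\partial W)\to G$ that extends over the solid torus extends over $W$ in the same number of ways;
\item all other homomorphisms $\pi_1(\partial W)\to G$ do not extend over $W$;
\item $W$ is a homology solid torus with $A$-coefficients, with the same meridian.
\end{itemize}
With this, a van Kampen gluing argument component by component gives $\#\Hom(\pi_1(N),G)=\#\Hom(\pi_1(M),G)$. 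A Mayer--Vietoris argument, using that $W$ and the solid torus have identical $A$-cohomology relative to the boundary, gives $H^2(N,A)\cong H^2(M,A)$. Hyperbolicity of $N$ follows from Thurston's hyperbolization, because the pieces $M\setminus L$ and $W$ are both hyperbolic with incompressible torus boundary. Alternatively one can ensure that $N$ is irreducible and atoroidal with infinite $\pi_1$, which is all that is used later, namely that $N$ is aspherical.

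The main obstacle is the filler $W$. We need it to simulate a solid torus simultaneously for $G$-representation counts and for $A$-cohomology. The construction we would try first is the complement in $S^1\times D^2$ of a knot $K$ whose exterior has $\pi_1$ normally generated by elements forced to map trivially to $G$, for instance iterated commutators of sufficiently high length, exploiting that $G$ is nilpotent or, in general, through a finite-quotient argument. After that we fill $K$ using the surgery that kills these elements. Polynomial time is clear once $W$ and the procedure for finding $L$ are in hand, since the output has size linear in $|M|$; here we would rely directly on the explicit algorithm of \cite{Clem:HQFT}.
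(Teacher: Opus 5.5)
There is a genuine gap; in fact there are two. \textbf{First, the hyperbolicity step fails as stated.} If $M\setminus\nu L$ and $W$ both have incompressible torus boundary, then each gluing torus is incompressible in $N$. Hence $\pi_1(N)\supseteq\ZZ^2$ and $N$ is toroidal. A closed hyperbolic 3-manifold contains no such torus, and Thurston's hyperbolization does not apply, since it requires atoroidality. Your fallback (``irreducible and atoroidal'') fails for the same reason. At best your construction yields an irreducible $N$ with infinite $\pi_1$, hence aspherical. That is enough for Theorem~\ref{thm:skeleton}, but it is not the statement you are proving.

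\textbf{Second, the filler $W$, which carries the whole argument, is never constructed, and your sketch cannot work.} Any knot exterior has infinite $H_1$; for a knot in $S^1\times D^2$ it is $\ZZ^2$. So its $\pi_1$ maps nontrivially to every nontrivial $G$, and it cannot be normally generated by elements that die in $G$. Iterated commutators die only when $G$ is nilpotent, whereas the theorem is for arbitrary finite $G$. And a Dehn filling kills the normal closure of a single slope, not a prescribed family of commutators. You also rely on Ennes--Maria to supply $L$ and $W$, but that is not what their construction provides.

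The missing idea is the one the paper uses. Ennes--Maria take a Heegaard splitting $M=H_0\cup_{\Sigma_g}H_1$ and reglue it by a pointed diffeomorphism $\Phi$ of $\Sigma_g$ satisfying $\Phi^*\phi=\phi$ for \emph{every} $\phi\colon\pi_1(\Sigma_g)\to G$, chosen so that the result is hyperbolic. Van Kampen then gives equality of the $G$-counts directly, with no toroidal pieces.

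The $A$-statement then comes for free, with no separate homology-solid-torus condition or Mayer--Vietoris argument. Since $A\le G$, $\Phi^*$ also fixes every $\phi\colon\pi_1(\Sigma_g)\to A$, so $\Hom(\pi_1(N),A)\cong\Hom(\pi_1(M),A)$. Poincar\'e duality and universal coefficients then give $\#H^2(-,A)=\#H_1(-,A)=\#\Hom(\pi_1(-),A)$ for closed orientable 3-manifolds.

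If you want to keep a drilling approach, the analogous fix is to drop $W$ and simply Dehn fill $M\setminus\nu L$ along slopes $\mu+ke\lambda$, where $e$ is the exponent of $G$. This preserves the sets of $G$- and $A$-representations exactly, because $\rho(\mu\lambda^{ke})=\rho(\mu)$. You would still owe an efficient way to find $L$ and an effective hyperbolic Dehn filling bound, and your proposal supplies neither.
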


\begin{proof}
	If we did not insist that $\#H^2(N,A) = \#H^2(M,A)$, then the theorem is simply a special case of the main result of \cite{Clem:HQFT}, but rephrased in terms of triangulations instead of Heegaard diagrams.
	This rephrasing is no trouble because the authors \emph{in loc.~cit.}~ work with \emph{dense} representations of Heegaard curves, rather than normal representations.
	With a dense encoding, there are efficient algorithms to convert between triangulations and Heegaard diagrams \cite[Thm.~5.6]{Clem:HQFT}.

	As it turns out, the condition $\#H^2(N,A) = \#H^2(M,A)$ follows automatically from the construction given in \cite{Clem:HQFT}.
	We explain this now, only bothering to give the relevant topological details of the construction, with all of the impertinent complexity stripped away.
	This is easiest to explain via Heegaard splittings. 

	Recall that a \emph{Heegaard splitting} of the orientable 3-manifold $M$ is a choice of embedded, orientable genus $g$ surface $\Sigma_g \subset M$ such that $\overline{M \setminus \Sigma_g}$ is homeomorphic to the disjoint union of two genus $g$ 3-dimensional handlebodies $H_0$ and $H_1$.
	Thus we have
	\[ M = H_0 \bigsqcup_{\partial H_0 = \Sigma_g = \partial H_1} H_1.\]
	It follows from van Kampen's theorem that
	\[ i_*: \pi_1(\Sigma_g) \to \pi_1(M)\]
	is surjective.
	Morover, any homomorphism $\phi: \pi_1(M) \to G$ is uniquely determined by a homomorphism $\phi: \pi_1(\Sigma_g) \to G$ that factors through both $\pi_1(\Sigma_g) \to \pi_1(H_0)$ and $\pi_1(\Sigma_g) \to \pi_1(H_1)$. 

	To build the 3-manifold $N$ that is both hyperbolic and has the same number of homomorphisms $\pi_1(N) \to G$ as $\pi_1(M)$, the authors of \cite{Clem:HQFT} find a suitable (pointed) diffeomorphism $\Phi: \Sigma_g \to \Sigma_g$ such that for \emph{all} homomorphisms $\phi: \pi_1(\Sigma) \to G$, the action of $\Phi$ is trivial: $\Phi^*\phi = \phi$.
	They then ``twist" the identification of $\partial H_0=\Sigma_g$ and $\partial H_1=\Sigma_g$ by using $\Phi$ (instead of the identity map):
	\[ N = H_0 \bigsqcup_{\Phi: \partial H_0 \to \partial H_1} H_1.\]
	But now, using Poincar\'e duality for our orientable 3-manifolds, the fact that $A = Z(G) \le G$, and the choice of $\Phi$, we have
	\[\begin{aligned}
		H^2(N,A) &\cong H_1(N,A) \\
			&\cong \Hom(\pi_1(N),A) \\
			&\cong \{\phi: \pi_1(\Sigma_g) \to A \mid \phi \text{ factors through } H_0 \text{ and } \Phi^*\phi \text{ factors through } H_1\} \\
			&\cong \{\phi: \pi_1(\Sigma_g) \to A \mid \phi \text{ factors through both } H_0 \text{ and } H_1 \} \\
			&\cong \Hom(\pi_1(M),A) \\
			&\cong H_1(M,A) \\
			&\cong H^2(M,A).
	\end{aligned}\]
\end{proof}

Recall that the universal covering space of a (closed) hyperbolic 3-manifold is the Poincar\'e ball, which is contractible.
Thus hyperbolic 3-manifolds have trivial homotopy groups in degree 2 and higher, meaning they are Eilenberg-MacLane spaces for their fundamental groups.
(More generally, the sphere theorem implies the 3-manifolds that are Eilenberg-MacLane are exactly the same as the 3-manifolds with trivial second homotopy group $\pi_2$ and infinite fundamental group $\pi_1$.)

Step (2) of the algorithm can be performed in polynomial time for the reasons already explained at the end of Subsection \ref{ss:concluding}.

Clearly step (4) is efficient.

Finally, step (3) can be accomplished in polynomial time for the following more general reason.

\begin{thm}
	\label{thm:skeleton}
	Fix a positive constant $C$, finite abelian groups $A$ and $B$, and a normalized 2-cocycle $\eta \in Z^2(B,A)$ (where $A$ is treated as a trivial $B$ module).
	If $N$ is a connected simplicial complex with $\pi_2(N) = 0$ and $|H^2(N,A)|<C$, then there exists a polynomial time algorithm (in the number of simplices of $N$) to compute
	\[ \#\{\psi \in \Hom(\pi_1(N),B) \mid [\psi^*\eta] = 0 \in H^2(\pi_1(N),B) \}.\]
\end{thm}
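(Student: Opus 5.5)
The plan is to reduce everything to simplicial cochain computations on $N$ and then invoke Lemma \ref{lem:quad} and Lemma \ref{lem:gauss}. (The target group in the displayed condition should be read as $H^2(\pi_1(N),A)$.)

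\emph{Step 1: identify group cohomology with simplicial cohomology.} Since $N$ is connected with $\pi_2(N)=0$, attaching cells of dimension $\ge 4$ kills the higher homotopy groups and produces a $K(\pi_1(N),1)$ with the same 3-skeleton as $N$. Hence the classifying map $N\to K(\pi_1(N),1)$ induces $H^1(\pi_1(N),-)\cong H^1(N,-)$, and also an injection $H^2(\pi_1(N),A)\hookrightarrow H^2(N,A)$. In fact it is an isomorphism, because $H^2$ depends only on the 3-skeleton. In particular, the vanishing of $[\psi^*\eta]$ in group cohomology can be tested in $H^2(N,A)$, and $|H^2(N,A)|<C$.

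\emph{Step 2: make the groups and the quadratic map explicit.} Using Smith normal form of the simplicial coboundary matrices, compute in polynomial time:
\begin{itemize}
\item $H:=\Hom(\pi_1(N),B)\cong H^1(N,B)=Z^1(N,B)/B^1(N,B)$, with an explicit primary decomposition and explicit representative cocycles for its generators;
\item $K:=H^2(N,A)$, as an explicit quotient $Z^2/B^2$, together with a procedure reducing any 2-cocycle to normal form in $K$.
\end{itemize}
Fix a spanning tree and order the vertices. A simplicial 1-cocycle $z\in Z^1(N,B)$ then gives a homomorphism $\psi_z$, and $\psi_z^*\eta$ is represented by the simplicial 2-cochain
\[(v_0v_1v_2)\mapsto \eta\bigl(z(v_0v_1),z(v_1v_2)\bigr).\]
This is the standard cup-product-type formula; that it is a cocycle follows from the cocycle identity for $\eta$ together with $z(v_0v_2)=z(v_0v_1)+z(v_1v_2)$. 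Changing $z$ by a coboundary changes this cochain only by a coboundary, because $\eta$'s class is natural. Thus the oracle $Q:H\to K$ is evaluated by choosing a representative $z$, forming the 2-cochain, and reducing it modulo $B^2$, all in polynomial time. Lemma \ref{lem:quad} says $Q$ is quadratic.

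\emph{Step 3: count the zeros.} Apply Lemma \ref{lem:gauss} with $|K|<C$. This counts the $\psi$ with $[\psi^*\eta]=0$ using a constant number of Gauss sums.

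\emph{Main obstacle.} The delicate part is Step 2's claim that the explicit simplicial 2-cochain represents the pullback of the group class $[\eta]$ under $N\to K(\pi_1(N),1)\to BB$. I would verify this by realizing $\psi$ as a simplicial map from $N$ (with ordered vertices) to the bar construction of $B$: a 2-simplex maps to the bar simplex $[z(v_0v_1)\,|\,z(v_1v_2)]$, where $\eta$ is evaluated directly. Independence of the representative $z$ should follow from simplicial homotopy invariance. I would then check that the whole computation stays polynomial in the number of simplices.
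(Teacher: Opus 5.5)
Your proposal is correct and follows essentially the same route as the paper. Both arguments attach cells of dimension at least $4$ to obtain a $K(\pi_1(N),1)$ with the same $3$-skeleton, identify $H^1$ and $H^2$ of $\pi_1(N)$ with simplicial cohomology, use the cochain $\eta(\beta(v_0v_1),\beta(v_1v_2))$ as the quadratic map (computed via Smith normal form), and count its zeros with Lemma \ref{lem:gauss}. Your explicit cocycle check and your bar-construction argument that this cochain represents $\psi^*[\eta]$ supply details the paper only asserts, and you are right that the coefficients in the displayed condition should be $A$ rather than $B$.
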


Note that we only need the 3-skeleton of $N$ in what follows.

\begin{proof}
	Since $N$ is connected (\emph{i.e.}~ $\pi_0(N)=0$) and $\pi_2(N)=0$, the 3-skeleton $\Delta$ of $N$ can be completed to a (possibly infinite dimensional) CW complex $\tilde{\Delta}$ by inductively gluing on $(d+1)$-balls to kill any non-trivial homotopy groups $\pi_d(\Delta)$ for $d=3,4,5,\dots$.
	The resulting space $\tilde{\Delta}$ is an Eilenberg-MacLane space $K(\pi_1(N),1)$.
	It is well-known then that the group cohomology of $\pi_1(N)$ (with any untwisted coefficients in any abelian group $A$) is isomorphic to the cellular cohomology of $\tilde{\Delta}$ (with the same coefficients).\footnote{
		Of course, we can do the same for \emph{non-trivial} coefficient modules by equipping $\tilde{\Delta}$ with a local system.
	However, for our purposes, trivial modules are sufficient.
	Moreover, it is not clear that the conclusion of the theorem holds in the twisted setting.}
	Even better, because $\Delta$ is a 3-dimensional simplicial complex, we can identify $H^d(\pi_1(N), A)$ with the \emph{simplicial} cohomology $H^d_{simp}(\Delta, A)$ for each $d=0,1,2$, meaning we do not actually need to build $\tilde{\Delta}$ algorithmically.

	The identification $H^2(\Delta, A) \cong H^2(\pi_1(N), A)$ is more refined than simply being an isomorphism of homology groups.
	There is a natural bijection between
	\[ \{\psi \in \Hom(\pi_1(N),B) \mid [\psi^* \eta] = 0 \in H^2(\pi_1(N),A) \}\]
	and
	\[ \{[\beta] \in H^1_{simp}(\Delta,B) \mid [\beta^*\eta] = 0 \in H^2_{simp}(\Delta,A) \}.\] 
	Note that we've abused notation slightly, since $\eta: B \times B \to A$ is a group cocycle but $\beta: \Delta^1 \to B$ is a $B$-valued simplicial 1-cocycle on $\Delta$.
	(Here $\Delta^k$ is the set of $k$-cells of $\Delta$ for $k=0,1,2,3$.)
	When we write $\beta^*\eta$, what we mean is the $A$-valued simplicial 2-cocycle on $\Delta$ defined by taking
	\[ \beta^*\eta([v_0,v_1,v_2]) = \eta(\beta([v_0,v_1]),\beta([v_1,v_2])) \in A \]
	for each 2-simplex $[v_0,v_1,v_2]$ in $\Delta^2$.

	The map $\beta \mapsto \beta^*\eta$ yields a set-theoretic map
	\[ (-)^*\eta: Z^1_{simp}(\Delta, B) \to Z^2_{simp}(\Delta, A)\]
	that descends to the quadratic function
	\[ (-)^*[\eta]: H^1_{simp}(\Delta, B) \to H^2_{simp}(\Delta,A).\]
	Since $A$ and $B$ are fixed, we may assume we know their primary decompositions.
	From there, using standard techniques as in \cite{snf} or \cite{me:TY}, we can compute the primary decompositions of both $H^1_{simp}(\Delta, B)$ and $H^2_{simp}(\Delta,A)$, as well as a polynomial time algorithm to evaluate the function $(-)^*[\eta]$ with respect to these bases.
	This gives us everything we need to apply Lemma \ref{lem:gauss}.
\end{proof}

We conclude by speculating that it may be possible to avoid the use Theorem \ref{thm:clem} in the proof of Theorem \ref{thm:2} by working with the torsion linking pairing of the 3-manifold $M$.

\newcommand{\etalchar}[1]{$^{#1}$}

%%%%%%%%%%%%%%%%%%%%%%%%%%%%%%%%%%%%%%%%%%%%%%%%%%%%%%%%%%%%%%%%%%%%%%%%%%%%%%%%
\end{document}